\numberwithin{equation}{section}
\tikzset{
    2-cell/.style={
        pattern=north west lines,
        opacity=0.5,
    }
}
\tikzstyle{in-equation} = [baseline=(current bounding box.center)] 
\tikzset{
    partial ellipse/.style args={#1:#2:#3}{
        insert path={+ (#1:#3) arc (#1:#2:#3)}
    },
}
\tikzset{%
    /cylinder settings/.is family,
    /cylinder settings,
        x radius/.store in=\cylinderradiusx,
        y radius/.store in=\cylinderradiusy,
        top/.store in=\cylindertop,
        bottom/.store in=\cylinderbot,
    default/.style={%
        x radius=2,
        y radius=1,
        top=3,
        bottom=0,
    },
}
\tikzset{%
    /configuration settings/.is family,
    /configuration settings/.search also={/tikz},
    /configuration settings,
        x radius/.store in=\configradiusx,
        y radius/.store in=\configradiusy,
        start/.store in=\configstart,
        end/.store in=\configend,
        line/.initial = {very thick, line cap=round},
    default/.style={%
        x radius=2,
        y radius=1,
    },
}
\tikzset{%
    /flow settings/.is family,
    /flow settings/.search also=/cylinder settings,
    /flow settings,
        top start/.store in=\flowtopstart,
        top end/.store in=\flowtopend,
        bottom start/.store in=\flowbotstart,
        bottom end/.store in=\flowbotend,
        top start/.value required,
        top end/.value required,
        bottom start/.value required,
        bottom end/.value required,
    default/.style = {
        /cylinder settings/default,
    }
}
\tikzset{%
    /twisted flow settings/.is family,
    /twisted flow settings/.search also={/flow settings, /cylinder settings, /tikz},
    /twisted flow settings,
        out top/.store in=\flowtwistedouttop,
        out bottom/.store in=\flowtwistedoutbottom,
        in top/.store in=\flowtwistedintop,
        in bottom/.store in=\flowtwistedinbottom,
        behind/.initial={draw={dashed}, fill, fill opacity=0.25},
    default/.style={%
        /flow settings/default,
    },
}
\tikzset{%
    pics/cylinder/.style={%
        code = {%
            \tikzset{%
                /cylinder settings,
                default,
                #1,
            }
            \edef\cylinderradius{\cylinderradiusx\space and \cylinderradiusy}

                \draw (0, \cylindertop) ellipse (\cylinderradiusx{} and \cylinderradiusy);

            \draw (0, \cylinderbot) [partial ellipse=180:360:{\cylinderradius}];
            \draw[dashed] (0, \cylinderbot) [partial ellipse=0:180:{\cylinderradius}];
            \draw (-\cylinderradiusx, \cylindertop) -- (-\cylinderradiusx, \cylinderbot);
            \draw ( \cylinderradiusx, \cylindertop) -- ( \cylinderradiusx, \cylinderbot);
        },
    },
    pics/configuration/.style={%
        code = {%
            \tikzset{%
                /configuration settings,
                default,
                #1,
            }

            \edef\configradius{\configradiusx\space and \configradiusy}

            \path[
                pic actions,
                very thick,
                line cap=round,
            ] (0, 0) +(\configstart:\configradius)
                arc (\configstart:\configend:\configradius)
                \ifx\relax\tikzpictext%
                \else%
                    node[midway, above, \tikzpictextoptions] {\tikzpictext}
                \fi%
                ; 

        },
    },
    pics/flow/.style={%
        code = {%
            \tikzset{%
                /flow settings,
                default,
                #1,
            }

            \begin{scope}
                \edef\flowradius{\cylinderradiusx\space and \cylinderradiusy}

                \path[pic actions]
                    (0, \cylindertop) +(\flowtopstart:\flowradius)
                        to[out=270, in=90]
                    ($(0, \cylinderbot) + (\flowbotstart:\flowradius)$)
                        arc (\flowbotstart:\flowbotend:\flowradius)
                        to[out=90, in=270]
                    ($(0, \cylindertop) + (\flowtopend:\flowradius)$)
                        arc (\flowtopend:\flowtopstart:\flowradius)
                        --
                    cycle
                    ;  
            \end{scope}
        },
    },
    pics/twisted flow/.style={%
        code = {%
            \tikzset{%
                /twisted flow settings,
                default,
                #1,
            }

            \begin{scope}
                \edef\flowradius{\cylinderradiusx\space and \cylinderradiusy}

                \path[pic actions]
                    (0, \cylindertop) +(\flowtopstart:\flowradius)
                        to[out=270, in=90]
                    (\cylinderradiusx, \flowtwistedoutbottom)
                        --
                    (\cylinderradiusx, \flowtwistedouttop)
                        to[out=90, in=270]
                    ($(0, \cylindertop) + (\flowtopend:\flowradius)$)
                        arc (\flowtopend:\flowtopstart:\flowradius)
                        -- cycle
                    ;  


                \path[pic actions, /twisted flow settings/behind]
                    (\cylinderradiusx, \flowtwistedouttop)
                        to[out=270, in=90]
                    (-\cylinderradiusx, \flowtwistedintop)
                        --
                    (-\cylinderradiusx, \flowtwistedinbottom)
                        to[out=90, in=270]
                    (\cylinderradiusx, \flowtwistedoutbottom)
                        -- cycle
                    ;  

                \path[pic actions]
                    (0, \cylinderbot) +(\flowbotstart:\flowradius)
                        to[out=90, in=270]
                    (-\cylinderradiusx, \flowtwistedinbottom)
                        --
                    (-\cylinderradiusx, \flowtwistedintop)
                        to[out=270, in=90]
                    ($(0, \cylinderbot) + (\flowbotend:\flowradius)$)
                        -- cycle
                    ;  
            \end{scope}
        },
    }
}
\theoremstyle{definition}
\newtheorem*{theorem}{Theorem}
\newtheorem{proposition}{Proposition}
\newtheorem{corollary}[proposition]{Corollary}
\newtheorem{lemma}[proposition]{Lemma}
\newtheorem{assumption}{Assumption}
\newtheorem{definition}[proposition]{Definition}
\newtheorem*{question}{Question}
\newtheorem{remark}[proposition]{Remark}
\newtheorem{example}[proposition]{Example}
\numberwithin{proposition}{subsection}
\newcommand{\pp}{\sqSubset}
\newcommand{\ppp}{\llcurly}
\newcommand{\C}{{\mathcal{C}}}
\newcommand{\cS}{{\mathcal{S}}}
\renewcommand{\O}{{\mathcal{I}}}
\newcommand{\G}{\mathcal{G}} 
\newcommand{\simpl}{\mathbf{\Delta}} 
\newcommand{\cycl}{\mathbf{\Lambda}} 
\newcommand{\slice}{{\mathcal{C}/T}}
\newcommand{\F}{I_{\C/T}}
\newcommand{\Aut}{{\mathrm{Aut}(T)}}
\newcommand{\aut}{{\mathbf{Aut}}}
\newcommand{\mfld}{{\mathbf{Mfld}}}
\newcommand{\Para}{{\cycl^{\!\!\infty}}}
\newcommand{\dupl}{{\mathbf{K}}}
\newcommand{\gcat}{{\G{\operatorname{-\mathbf{Cat}}}}}
\newcommand{\psh}{{\mathbf{Psh}}}
\newcommand{\preord}{{\mathbf{Pro}}}
\newcommand{\giso}{{\G{\operatorname{-\psh}}}}
\newcommand{\gpreord}{{A{\operatorname{-\preord}}}}
\newcommand{\source}{\mathsf{s}}
\newcommand{\target}{\mathsf{t}}
\DeclareMathOperator{\id}{id}
\DeclareMathOperator{\ho}{{\operatorname{ho}}}
\DeclareMathOperator{\track}{track}
\NewDocumentCommand{\Hom}{O{\C}}{#1}
\newcommand*{\To}{\mathrel{\Rightarrow}}
\newcommand*{\hcomp}{\hspace{0pt}}
\newcommand*{\WhiskL}{\hspace{0pt}}
\newcommand*{\WhiskR}{\hspace{0pt}}
\newcommand*{\hsim}{\sim_h}
\newcommand*{\horiz}{\diamond}
\newcommand*{\gact}{\mathbin{\smalltriangleright}}
\newcommand*{\ad}{\mathsf{a}}
\newcommand*{\adw}{\mathbin{\blacktriangleright}}
\begin{document}
\title[Cyclic duality for 
slice 2-categories]
{Cyclic duality for 
slice\\
and orbit 2-categories}
\author[Boiquaye]{John Boiquaye}
\author[Joram]{Philipp Joram}
\author[Krähmer]{Ulrich Kr\"ahmer}
\email{jboiquaye@ug.edu.gh}
\email{philipp.joram@taltech.ee}
\email{ulrich.kraehmer@tu-dresden.de}

\begin{abstract}
The self-duality of
the paracyclic category
is extended to a
certain class of
homotopy categories of 
(2,1)-categories. These 
generalise the orbit category of a
group 
and are
associated to certain self-dual preorders 
equipped with   
a presheaf of groups and a cosieve.
Slice 2-categories of
equidimensional submanifolds of a  
compact manifold
without boundary form a particular
case, and for $S^1$,
one recovers cyclic duality. 
This provides in
particular a visualisation of
the results of Böhm and \c Stefan
on the topic. 
\end{abstract}
\maketitle

\tableofcontents

\section{Introduction}
\subsection{Slice 2-categories}
This article is about 
embeddings of subobjects,
their deformations, and complements. 
Our basic setting
is:

\begin{assumption}\label{eins}
$\C$ is a (2,1)-category 
all of whose 1-cells are
monic.
\end{assumption}

Recall that 
the \emph{slice category} of $\C$ over an
object $T \in \C_0$ 
is the preorder of all
1-cells in $\C$ with codomain $T$ and the 
preorder relation 
$$
    x \preceq y 
    :\Leftrightarrow 
    \exists f \in \C_1 : 
    x = yf.
$$ 
By definition, a \emph{subobject}
of $T$ is an isomorphism 
class $[x]$ of an object in this
preorder, see
e.g.~\cite[Section V.7]{MR1712872}
or~\cite[Section 6.1.6]{MR2522659}. 

The \emph{slice 2-category} $\C/T$
records further information
about the subobjects of $T$: 
its objects are
1-cells with codomain $T$, 
and its 1-cells $x \to y$ are
2-cells 
$ \phi \colon x \To z$ in $\C$
with $z \preceq y$; think of a
deformation of a subobject $[x]$
to a subobject $[z]$ contained 
in $[y]$. Its
2-cells are 2-cells in $\C$ that
deform the target object $[z]$ 
inside $[y]$
(see Definition~\ref{s2cdef}), 
so in the
\emph{homotopy category}
$\ho(\slice)$
(Definition~\ref{hcdef}) 
such final perturbations   
of the target $[z]$ get
identified. 

\subsection{Orbit 2-categories}
For many $\C$, the 
ordinary slice category over $T$ is
self-dual, with the dual
$x^\degree$ of an object
representing some form of
complement of $[x]$ in $T$. The
question we are interested in is:

\begin{question}
When does a self-duality of 
$(\C/T_0,\preceq)$ lift to 
$\ho(\C/T)$?
\end{question}

This was triggered by the following
example that we will return to in
Section~\ref{motivation} below,
where we provide details and
definitions:

\begin{example}\label{urbsp}
In the (2,1)-category 
$\mfld^1$ of
embeddings of compact 1-dimensional
manifolds
(Definition~\ref{defmfld}), 
all ordinary slice categories are
self-dual. The homotopy category 
$\ho(\mfld^1/[0,1])$ is a model of
the \emph{simplicial category}
hence is not
self-dual. In contrast, 
$\ho(\mfld^1/S^1)$ is a model of
the \emph{paracyclic category} 
which is self-dual.
\end{example}

The answer we give assumes that 
the self-duality of
$(\C/T_0,\preceq)$ is equivariant
with respect to the natural action of the 
group $\Aut$ of
invertible 1-cells $T\to T$:

\begin{assumption}\label{zwei}
$\degree \colon
    \C/T_0 \to \C/T_0$,
$x \mapsto x^\degree$
is a map such that 
$$
    [x^{\degree\degree}] = [x],
    \quad
    [(gx)^\degree] = 
    [g(x^\degree)],\quad
    x \preceq y 
    \Leftrightarrow 
    y^\degree \preceq x^\degree
$$
holds for all $x,y \in \C/T_0$ 
and $g \in \Aut$.
\end{assumption}

Such a self-duality gives rise to 
a subrelation 
$\ppp$ of $\preceq$ which is 
a \emph{$\Aut$-cosieve} in
$(\C/T_0,\preceq)$, i.e.~which is closed
under the $\Aut$-action and under
postcomposition
(Definition~\ref{defcos}, 
Proposition~\ref{pppiscos}).
In $\C=\mfld^d$, $x \ppp y$
means that $[x]$ is contained in the 
interior of $[y]$. 

Our main result 
states that if all $x \in \C/T_0$ 
satisfy a strong form of the
\emph{homotopy extension property} 
(Assumptions~\ref{seven}
and~\ref{fortytwo} in
Section~\ref{assumptionssec}, where
this is discussed in full detail) 
and admit an abstract version
of \emph{tubular
neighbourhoods}
(Assumption~\ref{last} therein), 
then the answer to
our question is affirmative. 
More precisely, we prove the
following theorem, where
expressions such as
$ \gamma y$ and $y \xi$ denote the 
horizontal composition of the
2-cell $ \mathrm{id} _y$ with
2-cells $ \gamma $ respectively $
\xi $, and where 
$$
    G:=\bigcup_{g \in \Aut} 
    \C_2(\mathrm{id} _T,g).
$$
 
\begin{theorem}
If $\ppp$ is an $\Aut$-cosieve in 
$(\C/T_0,\preceq)$ with 
$$ 
    \mathrm{id} _T^\degree \ppp
\mathrm{id}_T^\degree,$$ 
and if 
for all $f,h \colon X \to Y$, $y
\colon Y \to T$, and 
$ \phi \colon yf \To yh$,
we have
\begin{align*}
    (\exists \xi \colon f \To h 
    : 
    \phi = y \xi) 
    \quad\Leftrightarrow\quad
    (\forall u \ppp y^\degree 
    \exists \gamma \in G 
    : \gamma u=u,
    \gamma yf = 
    \phi)
\end{align*}
and
for all $u \ppp y^\degree,
    v \ppp y^\degree$ there exists  
   $\tau \colon 
    \mathrm{id}_T \To t$ in $G$ and  
    $r \ppp y$ with
\begin{equation}\label{apricot}
    \tau u=u, \ \tau v = v,\   
    [tr] = [y],
\end{equation}
then $\degree$ lifts to an 
$\Aut$-equivariant self-duality on $\ho(\C/T)$.
\end{theorem}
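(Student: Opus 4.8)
The plan is to build the duality as a contravariant functor $D\colon \ho(\slice)^{\mathrm{op}}\to\ho(\slice)$ that agrees with $\degree$ on objects, is implemented on morphisms by transporting a deformation to the complements by means of an ambient isotopy, and then to check that $D$ is well defined on homotopy classes, is functorial, assembles into a natural isomorphism $DD^{\mathrm{op}}\cong\id$, and is $\Aut$-equivariant. The two inputs I rely on throughout are: every morphism $x\to y$ of $\slice$ is represented by a $2$-cell $\phi\colon x\To z$ with $z\preceq y$ (Definition~\ref{s2cdef}); and, by the strong homotopy extension property and the tubular-neighbourhood hypothesis (Assumptions~\ref{seven},~\ref{fortytwo} and~\ref{last}), every such $\phi$ lifts to an ambient isotopy, i.e.\ there is $\gamma\in G$, say $\gamma\colon\id_T\To g$ with $g\in\Aut$, such that $\gamma x=\phi$; in particular $gx=z$.

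On morphisms I define $D$ as follows. Given $[\phi]\colon x\to y$ with $\phi\colon x\To z$ and $z\preceq y$, I choose a lift $\gamma\colon\id_T\To g$ as above and set $\widetilde\gamma:=g^{-1}\gamma^{-1}\colon\id_T\To g^{-1}$, the reversed isotopy, which again lies in $G$; then I put $D[\phi]:=[\widetilde\gamma\, y^\degree]$, the class of the $2$-cell $\widetilde\gamma\, y^\degree\colon y^\degree\To g^{-1}y^\degree$. That this is a legal morphism $y^\degree\to x^\degree$ is the only place where Assumption~\ref{zwei} enters directly: from $z=gx\preceq y$ we get $y^\degree\preceq z^\degree=(gx)^\degree$, hence $[y^\degree]\preceq[g\,x^\degree]$, and applying the order-preserving $\Aut$-action of $g^{-1}$ yields $[g^{-1}y^\degree]\preceq[x^\degree]$. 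Intuitively $\gamma$ pushes $x$ into $y$, and the reversed isotopy pushes the smaller complement $y^\degree$ into the larger complement $x^\degree$.

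The main obstacle is well-definedness: $D[\phi]$ must depend neither on the chosen lift $\gamma$ nor on the representative $\phi$ of the homotopy class, and both reduce to a single statement. If $\gamma\colon\id_T\To g$ and $\gamma'\colon\id_T\To g'$ lift deformations of $x$ that agree in $\ho(\slice)$ as morphisms into $y$, then by Definition~\ref{hcdef} the two targets are connected by a deformation inside $y$, i.e.\ by a $2$-cell of the form $y\xi$; this is exactly the left-hand side of the biconditional in the Theorem. Its right-hand side then says that this difference is realised, for every $u\ppp y^\degree$, by an element of $G$ fixing $u$—that is, the discrepancy between $\gamma$ and $\gamma'$ is supported inside $y$ and can be made to fix any prescribed piece of the complement. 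Feeding this into the biconditional read in the dual direction, with the roles of $y$ and $y^\degree$ interchanged, should produce a $2$-cell $x^\degree\eta$ realising the difference of $\widetilde\gamma\,y^\degree$ and $\widetilde{\gamma'}\,y^\degree$ inside $x^\degree$, which is precisely the relation defining equality in $\ho(\slice)$. The genuinely delicate point is the universal quantifier ``$\forall u\ppp y^\degree$'': to pass from this family of complement-fixing isotopies to a single comparison on $y^\degree$ I invoke~\eqref{apricot}, which for the two relevant complement pieces $u,v\ppp y^\degree$ supplies one isotopy $\tau\colon\id_T\To t$ fixing both, together with $r\ppp y$ such that $[tr]=[y]$; this common tubular neighbourhood is what lets me glue the pointwise data and identify the two dual deformations. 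I expect this gluing, and the legitimacy of reading the biconditional dually, to be the heart of the argument.

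It remains to verify the formal properties. The identity $\id_x$ lifts to $\id_{\id_T}\in G$, whose reverse is again an identity, so $D$ preserves identities; for a composite $x\xrightarrow{[\phi]}y\xrightarrow{[\psi]}w$ I lift $\phi$ and $\psi$ and compose the resulting isotopies, using~\eqref{apricot} to arrange that the composite respects the complement filtration of the two targets, so that the reversed composite computes $D[\phi]\circ D[\psi]$ and $D$ is a contravariant functor. Involutivity follows because applying the reverse-isotopy construction twice returns the original $g$, whence $DD^{\mathrm{op}}$ sends $x$ to $x^{\degree\degree}$ and $[\phi]$ to its transport under the isomorphisms $[x^{\degree\degree}]=[x]$ of Assumption~\ref{zwei}; these assemble into the required natural isomorphism $DD^{\mathrm{op}}\cong\id$. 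Finally, $\Aut$-equivariance is immediate from $[(gx)^\degree]=[g\,x^\degree]$ together with the fact that the $\Aut$-action commutes with reversing isotopies and with whiskering, yielding a natural isomorphism $D(g\cdot-)\cong g\cdot D(-)$. Assembling these verifications shows that $\degree$ lifts to an $\Aut$-equivariant self-duality on $\ho(\slice)$.
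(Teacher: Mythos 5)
Your overall strategy is in essence the paper's: the duality is implemented by lifting a morphism $x\to y$ to an ambient $2$-cell $\gamma\in G$, inverting it, and whiskering with $y^\degree$ (the paper packages this as $[\gamma G_x]\mapsto[\gamma^{-1}G_{y^\degree}]$ on an auxiliary orbit $2$-category $\O_{s_{\C/T}}$, via Propositions~\ref{immerbesser}, \ref{vladimir}, \ref{lome} and Corollary~\ref{modest2}, rather than working directly on $\ho(\C/T)$). The object-level check that $[g^{-1}y^\degree]\preceq[x^\degree]$, the functoriality via $(\delta\gamma)^{-1}=\gamma^{-1}\delta^{-1}$, involutivity, and $\Aut$-equivariance are all fine and routine, exactly as you say.

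The genuine gap is that the central step --- well-definedness of $D$ on homotopy classes --- is not proved but only announced (``should produce'', ``I expect this gluing \dots to be the heart of the argument''). This is precisely the content of Proposition~\ref{lome}, and it is not a formal consequence of ``reading the biconditional dually'': to verify the right-hand side of the biconditional for the dual morphisms one must, \emph{for every} test object $v\ppp x^{\degree\degree}\sim x$, exhibit an element of $G_v\cap(\gamma')^{-1}G_{y^\degree}\gamma$, and nothing in your sketch produces it. Moreover you instantiate \eqref{apricot} at the wrong objects: you feed in ``the two relevant complement pieces $u,v\ppp y^\degree$'' and shrink $y$, whereas the correct instantiation takes $b=y^\degree$, $c=\gamma\gact v$ and $d=\gamma'\gact v$ (both of which lie $\ppp y=(y^\degree)^\degree$ because $v\ppp x$ and $\gamma\gact x\preceq y$), so that it is $y^\degree$ --- the \emph{source} of the dual morphism --- that gets shrunk to some $r\ppp y^\degree$ with $\tau\gact r\sim y^\degree$, while the two images of the test piece are fixed by $\tau$. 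One then applies the hypothesis $\gamma G_x\equiv\gamma'G_x$ at $u:=r$ and conjugates the resulting element of $G_r\cap\gamma'G_x\gamma^{-1}$ back by $\tau^{-1}$, using $G_{\tau\gact y^\degree}=\tau G_{y^\degree}\tau^{-1}$ (the Peiffer identity), $\tau\in G_{\gamma\gact v}\cap G_{\gamma'\gact v}$, and $G_x\subseteq G_v$ to land in $G_{y^\degree}\cap\gamma'G_v\gamma^{-1}$ as required. Without this computation the proof is incomplete, since this is exactly where hypothesis \eqref{apricot} does its work.
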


This applies in particular to 
$\mfld^d/T$ if
$T$ is a 
manifold with empty boundary 
(Examples~\ref{polityka},
\ref{polityka2},
\ref{tubular}, and
\ref{hepgood}).

We present  
the above theorem as a special
case of a more general self-duality
result: let $\G$ be a (strict)
2-group, $A$ be the group of its
1-cells, and $G$ be its source
group. Then we
associate  
a $\G$-category 
$\O_s$ to any 
$A$-preorder 
$(S,\sqsubseteq)$ equipped
with an $A$-equivariant presheaf 
$s \colon x \mapsto G_x$ 
of subgroups $G_x \subseteq G$,
(Proposition~\ref{o2cat}). 
When $S$ is the poset of all
subgroups of $A=G$ itself 
and $ s $ is the
identity, then $\O_s$ is the dual 
of the 
\emph{orbit category} of $G$ 
(Example~\ref{santoe}, see 
\cite{MR889050} for more
information). When the
$A$-preorder is self-dual and $\pp$
is an $A$-cosieve satisfying
(\ref{apricot}), then $\O_s$ gets
upgraded to a (2,1)-category with a
self-dual homotopy category
(Proposition~\ref{modest},  
Corollary~\ref{modest2}). The
remaining assumptions of
our theorem are there to
imply $\ho(\C/T) \cong 
\ho(\O_{s_{\C/T}})$ as
$\aut(T)$-categories, where 
$\G=\aut(T)$ is the automorpism
2-group of $T$
(Corollary~\ref{modest2},
Example~\ref{tubular}). 

We are not aware of a reference
that considers this exact type of
(2,1)-category, and we only develop
the part of the theory required to
prove the above theorem. Studying
other examples and applications 
might therefore be
an interesting topic for future
research, as there are many
applications of classical 
orbit categories in equivariant
algebraic topology, see
e.g.~\cite{MR656126,MR293618,MR1621979}
and in particular 
\cite {MR690052}, or the more
algebraically motivated articles 
\cite{MR3178068,MR3338608,
MR2457810,MR3576335,MR1852376}, as
well as \cite{MR3415508}.

\subsection{Cyclic duality}\label{motivation}
The motivation for this article
lies in homological  
and homotopical 
algebra. The categories of chain
complexes and of simplicial objects 
are not self-dual -- 
applying contravariant functors
yields cochain complexes
respectively cosimplicial objects. 
So by construction, the category of 
chain \emph{and} cochain
complexes (same graded module, no  
compatibility between boundary and
coboundary map assumed) is self-dual.
Building on the seminal work of
Connes (\cite{MR777584}, see also
\cite[Appendix 3.A]{MR1303779}),
Dwyer and Kan \cite{MR826872}
extended the Dold-Kan
correspondence to this setting and
called the corresponding 
homotopical objects and the 
governing index category $\dupl$
\emph{duplicial}.
The self-duality 
of $\dupl$ also descends to 
Connes' \emph{cyclic
category} $ \cycl $ which is
a quotient, and extends 
to the 
\emph{paracyclic category}
$\Para$
which is a localisation:
 
\begin{definition}\label{paradef}
The categories  
$ \simpl \subset 
\dupl \subset \Para $ are 
defined as follows:
\begin{itemize}
\item The objects are the 
natural numbers 
$0,1,2,3,\ldots$.
\item The morphisms 
$f \colon n \rightarrow m$ are 
the maps 
$\mathbb{Z} \rightarrow
\mathbb{Z} $ satisfying 
\begin{enumerate}
\item $i \le j \Rightarrow 
f(i) \le f(j)$ for all $i,j$,
\item $ f(j+n+1) = f (j)+m+1$ for all 
$j$,
\item $f(0) \ge 0$ (in $\dupl$ and 
$\simpl$), 
\item $f(n) \le m$ (in $\simpl$).
\end{enumerate} 
\end{itemize}
The \emph{cyclic dual} of $f \colon
n \to m$ is the morphism 
$f ^\degree \colon m \to n$ given
by
$$
    f^\degree (i) := 
    \max \{j \mid 
    -f(-j) \le i\}.
$$
\end{definition}
See~\cite{MR777584,MR826872,MR1244971,MR923136, 
MR1143017} for some original
references for these definitions,
and \cite[Appendix B]{MR3904731} for
some recent reflections. Note
that the above self-duality is 
not the most studied one on $\Para$,
but one that restricts to 
$\dupl$, see e.g.~\cite[p585]{MR826872} and
\cite[Section~4.2]{MR2803876} -- this
also addresses the warning on
\cite[p381]{MR3904731}.

Böhm and \c Stefan
\cite{MR2956318}
explored the self-duality of 
$\Para$ further 
from the perspective of
the bar construction. Our focus is
different: we describe 
$ \Para $ and $ \simpl$
in a unified way in which we
can point exactly at the reason why
the one is self-dual and the other
is not: both 
are (skeletal subcategories of) 
$\ho(\C/T)$ for suitable $\C$ and
$T$. For $\simpl$ we are looking at 
$\mfld^1/[0,1]$, while for $\Para$ it is 
$\mfld^1/S^1$, and the latter is
self-dual as $S^1$ has empty
boundary. 

We also find the resulting
visualisation of $\Para$ 
clarifying in several ways. 
In the standard description, 
the object $n$ of $\Para$ gets
visualised as $n+1$ points
on $S^1$. We replace these by
tubular
neighbourhoods, which is
anyway natural in many settings,
e.g.~the study of the cyclic homology
of DG algebras. Furthermore, 
the fact that
the objects of $\Para$ are
self-dual, $n^\degree=n$, 
is seen to be in a sense
coincidental -- the
complement of $n+1$ intervals in
$S^1$ happens to be again $n+1$
intervals. These are isotopic to the
original ones, but in higher-dimensional manifolds $T$, $[x]$
and the closure $[x^\degree]$ 
of the complement of 
$ \mathrm{im}\, x$ are in general
not diffeomorphic. 

Most importantly to us, this provides a spatial 
view on the results of
Böhm and \c Stefan. Their main aim
was to conceptually explain cyclic
duality in the setting of
Hopf-cyclic (co)\-ho\-mo\-lo\-gy
\cite{MR2065371,MR2815134,MR1718047,MR1868538}.
They showed (see
\cite[Theorem~4.7]{MR2956318}) 
that the simplicial
object resulting
from the bar construction
associated to a comonad $S_l$ 
and coefficients that they denote
by $\sqcup,\sqcap$ 
becomes
paracyclic in the presence of a  
second comonad $S_r$, a comonad
distributive law $ \Psi \colon S_l
S_r \To S_r S_l$, and 
$ \Psi $-(op)coalgebra structures
$i,w$ on the coefficients.
In our visualisation, this
corresponds to connecting the end
points of the interval
$[0,1]$ in which the simplicial
object is realised using a second
interval to obtain a circle $S^1$,
and the second comonad $S_r$ lives on
the dark side of the moon. 
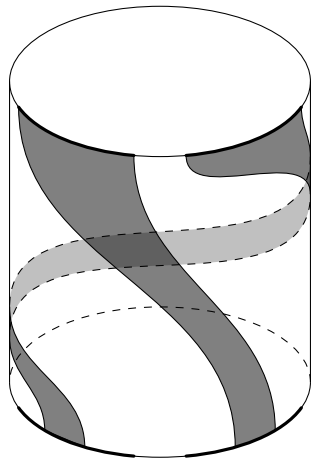
\begin{figure}
\begin{tikzpicture}
    \def\rx{2};
    \def\ry{1};
    \def\r{{\rx} and {\ry}}
    \def\h{4};

    \colorlet{embx}{black}
    \colorlet{flow}{black}

    \path (0, 0) pic {cylinder={top=\h, x radius=\rx, y radius=\ry}};

    \begin{scope}[fill=flow, fill opacity=0.5]
        \pic at (0, 0) [draw, fill] {
            twisted flow={
                x radius=\rx,
                y radius=\ry,
                top=\h,
                top start=280,
                top end=340,
                bottom start=220,
                bottom end=240,
                out top=3,
                out bottom=2.5,
                in top=1,
                in bottom=0.6,
                behind/.style = {dashed, fill opacity=0.25},
            },
        };

        \pic at (0, 0) [draw, fill] {
            flow={
                top=\h,
                top start=200,
                top end=260,
                bottom start=300,
                bottom end=320
            },
        };
    \end{scope}

    \pic[draw, embx] at (0, \h)
        {configuration={start=200, end=260}};
    \pic[draw, embx] at (0, \h)
        {configuration={start=280, end=340}};

    \pic[draw, embx] at (0, 0)
        {configuration={start=200, end=260}};
    \pic[draw, embx] at (0, 0)
        {configuration={start=280, end=340}};
\end{tikzpicture}
\caption{The cyclic operator $2
    \to 2$ in $\ho(\mfld^1/S^1)$.}
    \label{fig:twist-crossing-over}
\end{figure}

The string diagrams in
\cite{MR2956318} can now be
seen as
planar projections of our depiction
of morphisms in $\Para$ in which we
draw the track of a point in $S^1$
under an isotopy on a cylinder. 
In particular, 
Figure~\ref{fig:twist-crossing-over}
depicts the cyclic operator 
$f \colon \mathbb{Z} \to
\mathbb{Z}$, $f(j) = j+1$, acting
on the object $2 \in \Para$ (which
is denoted 
$t_2$ in \cite{MR2956318}).
The 
natural transformation $w$ 
corresponds to the track of an
interval passing on the 
right end from the front to
the back of the cylinder, 
the distributive law 
$ \Psi $
corresponds to two 
tracks crossing in the planar 
projection, 
while in the spatial resolution 
one of them runs down 
the front of the cylinder 
and the other one the back,
and finally the natural
transformation $i$ is where the
track from the back reappears on
the left end of the cylinder.
\\

The remainder of this article is
divided into three sections. In the
first, we provide some definitions 
from the theory of slice
2-categories, and discuss the
example $\mfld^d$ of embeddings of
compact $d$-dimensional manifolds. 
In the second, we develop parts of
a general theory of orbit
2-categories associated to certain
presheaves of groups 
on a preorder. Ordinary 
slice categories are 
used throughout as a
guiding example, but we also
discuss a few group-theoretic
examples in order to demonstrate
the scope of the concepts. In the
final section we focus again
on $\C/T$ and discuss the
assumptions of
our main theorem in detail.
\\
 
Throughout the paper, we suppress all
set-theoretic problems, so we tacitly
assume all categories to be as small as
required. Readers who are concerned about 
the application to manifolds should
restrict to submanifolds of 
$ \mathbb{R} ^{2d}$. Similarly, we
focus on strict 2-categories.    
\\

\noindent {\bf Acknowledgements.}
J.B.~is supported by
UG Carnegie BANGA-Africa Project
and
P.J.~is supported by
Estonian Research Council grant PSG749.

\section{Slice
2-categories}\label{slice2catsec}
This section contains background
material on (2,1)-categories (see 
e.g.~\cite{MR4261588} for
further information).
We recall in particular  the
definition of the homotopy
category 
$\ho(\slice)$
of a 
slice 2-category and discuss 
in some detail the example
of the (2,1)-category $\mfld^d$ 
of embeddings of 
compact $d$-dimensional 
manifolds with 2-cells given by 
isotopies. 

\subsection{(2,1)-categories}
Throughout, $\C$ is a strict 
(2,1)-category, that is, the
composition of 1-cells as
well as both the vertical and the
horizontal composition of 2-cells is 
strictly associative, 
and all 2-cells are 
invertible. In addition, we assume
that all 1-cells are monic.
The set of
1-cells between objects $X,Y
\in \C_0$ is denoted by 
$\C_1(X, Y)$; 1-cells are
denoted by lower case Roman letters
and their composition is written
as concatenation.  
The set of
2-cells between $f,g \in
\C_1(X,Y)$ is denoted by
$\C_2(f,g)$; 2-cells are
denoted by lower case Greek letters, and
their
vertical respectively horizontal
compositions by 
$ \alpha \circ\beta $ respectively 
$ \alpha \hcomp \beta $.
The identity 1-cell
in $\C_1(X,X)$ is denoted by $\id_X$;
analogously, the
identity 2-cell in 
$\C_2(f,f)$ is denoted by $\id_f$.
We denote the source and target maps 
$\C_2 \to \C_1$ and 
$\C_1 \to \C_0$ both by 
$\source$ respectively $\target$.  
Horizontal composition of 2-cells
with identity 1-cells is called
\emph{left-} respectively
\emph{right-whiskering}, and we
write $f \WhiskL \xi :=
\id_f \hcomp \xi$ respectively
$\xi \WhiskR g := \xi\id_g$,
if no confusion arises.

\begin{remark}\label{2groupremark}
Let
$ \alpha \colon f \To g$ be a
2-cell and $ \alpha^* \colon g
\To f$ be its inverse, 
so $ \alpha \circ \alpha^* = 
\mathrm{id} _g$ and 
$\alpha^* \circ \alpha = \mathrm{id}
_f$. If in addition $f,g \colon X \to
Y$ are
invertible as 1-cells, then 
a straightforward
computation shows 
(see e.g.~\cite[{Lemma 7}]{MR4190076})
that $ \alpha $ 
also has a horizontal 
inverse given by 
$$ 
	\alpha^{-1} := g^{-1} \WhiskL
\alpha^*
	\WhiskR f^{-1} \colon
	f^{-1} \To g^{-1}
$$ 
and 
the vertical inverse of $
\alpha^{-1} $
is $ (\alpha ^{-1})^* := 
g^{-1} \WhiskL \alpha 
\WhiskR f^{-1} \colon g^{-1} 
\To f^{-1}$. 
If furthermore 
$ \delta \colon g \Rightarrow h$
is another 2-cell, then we have in
this case
\begin{align}\label{pform}
    \delta \circ \alpha
&= 
    (\delta \circ \alpha) 
     \id_{\mathrm{id} _X}\nonumber\\
&= (\delta \circ \alpha)  
    ((\alpha^{-1})^* 
    \circ \alpha^{-1}) 
    \id_f\nonumber\\
&= ((\delta  
    (\alpha^{-1})^*) 
    \circ (\alpha \alpha ^{-1}) 
    )\id_f \\
&= ((\delta (\alpha^{-1})^* ) 
    \circ 
    \id_{\mathrm{id}_Y}) \id_f
\nonumber\\
&= \delta  (\alpha^{-1})^*   
    \id_f = 
    \delta \mathrm{id} _{g^{-1}}
    \alpha .\nonumber 
\end{align}
An alternative way to carry out
such computations is offered by the 
graphical calculus of string diagrams:
\begin{equation*}
    \begin{tikzpicture}[x=2cm, y=-4cm, baseline=($(alpha.base)!.5!(delta.base)$)]
        \coordinate (alpha) at (0, 1/4);
        \coordinate (delta) at (0, 3/4);

        \draw[black]
            (alpha |- 0, 0)
            node[above] {$f$}
            --
            node[midway, left] {$g$}
            (delta |- 0, 1)
            node[below] {$h$}
            ;
        \node[draw, circle, fill=white] at (alpha) {$\alpha$};
        \node[draw, circle, fill=white] at (delta) {$\delta$};
    \end{tikzpicture}
    \;=\;
    \begin{tikzpicture}[x=2cm, y=-4cm, baseline=($(alpha.base)!.5!(delta.base)$)]
        \coordinate (alpha) at (1, 1/2);
        \coordinate (delta) at (0, 1/2);
        \coordinate (center) at (1/2, 1/2);
        \coordinate (cap) at (1/2, 1/4);
        \coordinate (cup) at (1/2, 3/4);

        \draw[black] (alpha |- 1, 0)
            node[above] {$f$}
            -- (alpha |- cup)
            to[out=south, in=south] (center |- cup)
            to[out=north, in=south]
                node[above right,
font=\small] {$\!g^{-1}$}
                (center |- cap)
            to[out=north, in=north] (delta |- cap)
            to (delta |- 0 , 1)
            node[below] {$h$}
            ;

        \node[draw, circle, fill=white] at (alpha) {$\alpha$};
        \node[draw, circle, fill=white] at (delta) {$\delta$};
    \end{tikzpicture}
\end{equation*}
\end{remark}

\subsection{Slice categories}
In ordinary category theory,
the slice category $\slice$
over some object $T$ of a category
$\C$ has as objects all morphisms
$x\colon X \to T$, and as morphisms
commutative triangles
\begin{equation*}
    \begin{tikzcd}[column sep=small]
        X
            \ar[rr, "f"]
            \ar[dr, "x"{swap}]
            &
            &
        Y
            \ar[dl, "y"]
            \\
            &
        T
            &
    \end{tikzcd}.
\end{equation*}

For (2,1)-categories $\C$, there is 
the following generalisation
in which the equality
$x = y f$ is replaced with a
2-cell $x \To y f$:

\begin{definition}\label{s2cdef}
    The \emph{slice 2-category}
    over an object $T \in \C_0$ is
    the (2,1)-category $\slice$ with
    the following data:
    \begin{itemize}
        \item The objects are 
            1-cells
            $x\colon X \to T$ of
            $\C$,
        \item 1-cells between
            objects $x \colon 
X \to T$ and
            $y \colon Y \to T$ are pairs
            $(f, \phi)$ consisting
            of a 1-cell
            $f\colon X \to Y$ and
            a 2-cell
            $\phi\colon x \To y
 f$ that we will usually
depict as 
            \begin{equation}\label{eq:slice-2-cat:1-cell}
                \begin{tikzcd}[column sep=small]
                    X
                        \ar[dr,"x"{swap, name=x}]
                        \ar[rr, "f"]
                        &
                        &
                    Y
                        \ar[dl, "y"]
                        \ar[
                            from=x,
                            Rightarrow,
                            shorten <=1.7ex,
                            shorten >=1.5ex,
                            "\phi"{description}
                        ]
                        \\
                        &
                    T
                        &
                \end{tikzcd}
            \end{equation}
 \item The composition of
            1-cells
            $\phi\colon x \To y f$
            and
            $\psi\colon y \To z g$
            is given by
            $(g, \psi) (f, \phi)
                := (gf,
                    \psi \WhiskR f \circ \phi)$.
        \item 2-cells between
            $\phi\colon x \To y
 f$ and
            $\psi\colon x \To y
 g$ are
            2-cells
            $\xi\colon f \To g$
            such that
            \begin{equation}
                \label{eq:slice-2-cat:2-cell-cond}
                \psi = y\WhiskL \xi \circ \phi
            \end{equation}
 \item Vertical and horizontal
            composition of 2-cells
            in $\C / T$ is defined
            as the vertical
            (respectively
            horizontal) composition
            in
            $\C$.
 \end{itemize}
\end{definition}
           
See \cite[{Definition 7.1.1(3)}]{MR4261588} for
                a diagrammatic
depiction of the
                \emph{ice cream
                cone condition}
                \eqref{eq:slice-2-cat:2-cell-cond}.

\begin{remark}\label{cancel}
As we focus on 
(2,1)-categories in which
all 1-cells $x \colon X \to T$
are monic, the 2-cell $ \phi $
which is part of a 
1-cell $ (f,\phi) \colon 
x \to y$ in $\C/T$
uniquely determines the 1-cell 
$f$. We denote the
latter by $ f_\phi $ and simply
write $ \phi $ for 
$(f_\phi ,\phi )$.  
\end{remark}

\subsection{Homotopy categories}

Instead of just forgetting the
2-cells, one can construct
an ordinary category out of a
(2,1)-category $\C$
by identifying 1-cells if they
are related by a 2-cell:

\begin{definition}\label{hcdef}
    We call
    $f, g \in  \C_1$
\emph{homotopy equivalent} and
write $f \hsim g$ if there
    exists a 2-cell
    $\alpha \colon f \To g$.
    The \emph{homotopy
    category} $\ho(\C)$ of 
$\C$ is the category 
with objects ${\ho(\C)}_0 := \C_0$ and
morphisms
    \begin{align}
        {\ho(\C)}_1(X, Y) &:=
            \C_1(X, Y) / \hsim.
    \end{align}
\end{definition}

Note that \cite{MR4261588} calls $\ho(\C)$
the classifying category of $\C$,
see Example 2.1.27 therein. 

\begin{remark}\label{subobjects}
As we assume all 1-cells in $\C$
to be monic, the objects of $\C/T$
represent the \emph{subobjects} of
$T \in \C_0$. By definition, these
are equivalence classes $[x]$ 
with 
$x \colon X \rightarrow T,y \colon
Y \rightarrow T$ being equivalent
if and only if there is an
invertible 1-cell $f \colon X
\rightarrow Y$ such that $x = yf$. 
A \emph{homotopy
equivalence} between objects 
$X,Y \in \C_0$ is by definition a
pair of 
1-cells $f \colon X \rightarrow Y,g
\colon Y
\rightarrow X$ whose classes in  
$\ho(\C)$ are inverses of each
other, $fg \hsim \mathrm{id} _Y,
gf \hsim \mathrm{id} _X$. Thus the
isomorphism classes of the objects
in $\C/T$ are  
the \emph{homotopy classes 
of subobjects} of $T$.
\end{remark}

\subsection{Embeddings of manifolds}
We now define the motivating
example   
for this paper:

\begin{definition}\label{defmfld}
By the (2,1)-category $\mfld^d$ of 
\emph{embeddings of compact 
$d$-dimensional
manifolds}  
we mean the following:
\begin{itemize}
\item The objects of $\mfld^d$ are
compact smooth
$d$-dimensional manifolds $X$
with (possibly empty) boundary 
$ \partial X$, together with 
the empty manifold
$\emptyset$.  
\item
A 1-cell in
$\mfld^d$ is an 
embedding, by which we mean a 
smooth injective
immersion.
\item The composition of 1-cells is the
ordinary composition of maps. 
\item A 2-cell  
$f \To g$ 
between embeddings $f,g \colon
X \to Y$ 
is an isotopy class $[\phi]$ of isotopies 
$ \phi $ from $f$ to $g$, that is, 
of smooth maps $\phi\colon [0, 1]
    \times X \to Y$ 
such that the restrictions 
$ \phi (t,-) \colon X \to Y$ are
embeddings and for some 
$ \varepsilon >0$, we have
$$
\phi(t,-) = f,\quad
\phi(1-t, -) = g\quad \forall t \in
    [0,
    \varepsilon].
$$
\item The
horizontal composition of 2-cells is
induced by the level-wise composition of
isotopies, 
$$
	(\alpha \hcomp \beta )(t,p):=
	\alpha (t,\beta (t,p))
$$
while the vertical composition is induced
by the concatenation of the path
$\beta (-,p)$ followed by the path 
$ \alpha (-,\beta
(1,p))$. The vertical inverse of a
2-cell is taken by inverting the
orientation of a path. 
\end{itemize}
\end{definition}

See
e.g.~\cite[{p.~111}]{MR1336822} for
further details. 
Note that we
do not make any additional
assumptions on the behaviour of 
embeddings 
on $ \partial X$; in
particular, we do not assume it
embeds $X$ as a neat submanifold 
in the sense of 
\cite[{p.~30}]{MR1336822}.
Note further that the vertical
composition of isotopies themselves is
not strictly associative; however, since  
we define 2-cells to be 
isotopy classes of
isotopies, $\mfld^d$ is indeed a
strict 2-category. 

\subsection{Submanifolds}\label{rain}
The slice 2-category $\mfld^d/T$ 
describes embeddings of 
manifolds
into an ambient manifold $T$ of the
same dimension $d$. For
this entire Section~\ref{rain}, 
we fix embeddings 
$x \colon X \to T$
and $y \colon Y \to T$. 

A 1-cell in $(\mfld^d/T)_1(x,y)$ is
represented by an isotopy 
$$ \phi \colon x \To y  f_\phi $$
in $\mfld^d$, where 
$ f_\phi \colon X \to Y$ is the unique embedding
such that 
$$ 
	\phi (1,-) = y 
f_\phi.
$$ 
Note that we
are in the situation of
Remark~\ref{cancel}.  

To visualise such 1-cells,
it is convenient to introduce their 
track:

\begin{definition}
The \emph{track}
of an isotopy $ \phi $ 
is the smooth map 
$$
\track(\phi) \colon 
	[0,1] \times X
	\to 
	[0,1] \times T,\quad
	(t,p) \mapsto 
	(t, \phi (t,p)).
$$
\end{definition}
\begin{example}\label{exampleabove}
Figure~\ref{fig:mfld-slice:1-cell}
depicts the track of an isotopy 
$ \phi $ which represents a 1-cell 
$ [\phi] \in (\mfld^1/S^1)_1(x,y)$. Recall that we work
with isotopy classes of isotopies as
2-cells in $\mfld^d$ rather than
isotopies themselves. The isotopies
in the class $[\phi]$ share the embeddings $x$ and
$y$ of $X$ respectively $Y$ into
$S^1$ at the top
$(t=0)$ respectively
bottom $(t=1)$ of the cylinder.
If $ \psi \in [\phi ]$ is another
representative, then 
$\track(\psi)$ differs 
for $0<t<1$ from 
$\track( \phi )$ 
by an isotopy   
$$
	\omega \colon 
	[0,1] \times [0,1] \times 
	S^1 \to 
	[0,1] \times 
	S^1,
$$
so we have 
$$
	\track(\phi)(t,p) = 
	(t,\omega (0,t,p)) ,
	\quad
	\track(\psi)(t,p) = 
	(t,\omega (1,t,p)),
$$
as well as 
$$
	\omega(s,0,p) = 
	x(p),\quad 
	\omega(s,1,p) = y(p).
$$
	
The vertical composition of 1-cells
in $\mfld^1/S^1$ can be visualised as stacking such
cylinders on top of each other.

\begin{figure}
    \begin{tikzpicture}[
            configuration line/.style = {very thick},
        ]

        \pic {cylinder={x radius=2, y radius=1, top=3}};

        \foreach \i/\topstart/\topend/\botstart/\botend/\pos in {
            0/210/235/200/225/above right,
            1/260/285/270/290/above,
            2/295/325/310/330/above left%
        } {
            \pic[draw, fill=black, fill opacity=0.25] at (0, 0) {
                flow={
                    top=3,
                    top start=\topstart, top end=\topend,
                    bottom start=\botstart, bottom end=\botend,
                }
            };

            \pic[draw, very thick, line cap=round] at (0, 3) {
                configuration={
                    start=\topstart, end=\topend,
                }
            };
        }

        \foreach \i/\start/\stop in {0/200/225, 1/260/340} {
            \pic[draw, very thick, line cap=round] at (0, 0) {
                configuration={
                    start=\start, end=\stop,
                }
            };
        }

        \draw[->, shorten <=1em, shorten >=1em]
            (3, 3)
            node {$x$}
            --
            node[midway, right] {$[\phi]$}
            (3, 0)
            node {$y$};
    \end{tikzpicture}

    \caption{
        The track of an isotopy
        representing a 1-cell
        $[\phi] \colon x \to y$
        of $\mfld^1 / S^1$.
        $X=\source (x)$ 
    consists of three
        copies of the interval $[0,
        1]$, $Y=\source(y)$ of two.
        The thick lines at the top
        and bottom mark the subsets
        $\{0\} \times 
    \mathrm{im}\, x$ and
        $\{1\} \times 
    \mathrm{im}\, y$ of
        $[0, 1] \times S^1$.
    }
    \label{fig:mfld-slice:1-cell}
\end{figure}
\end{example}

Assume now that $ [\phi] , [\psi] 
\colon x \to y$ are two 1-cells
in $\mfld^d/T$, and let 
$ f_\phi,f_\psi$ be
the underlying embeddings of $X$
into $Y$.
A 2-cell
$[\xi]$ in
$(\mfld^d/T)_2([\phi],[\psi])$ 
is by definition a 2-cell $[\xi] \colon f_\phi
\To f_\psi$ in $\mfld^d$, so 
the representative 
$\xi \colon [0,1] \times X
\rightarrow Y$ is an isotopy from 
$f_\phi $ to $f_\psi$ satisfying 
$\psi = (y \WhiskL \xi ) \circ \phi $.

\begin{example}
As in Example~\ref{exampleabove} above, we
consider 
$d=1$ and $T=S^1$. Then the action
of a 2-cell $\xi$ can be pictured
as in
Figure~\ref{fig:mfld-slice:2-cell}.
\begin{figure}
    \begin{tikzpicture}[
            configuration line/.style = {very thick},
            in-equation,
        ]

        \pic {cylinder={x radius=2, y radius=1, bottom=0, top=5}};

        \foreach \i/\topstart/\topend/\corstart/\corend in {
            0/210/235/210/220,
            1/260/285/270/280,
            2/295/325/300/335%
        } {
            \pic[draw, fill=black, fill opacity=0.25] at (0, 0) {
                flow={
                    top=5,
                    bottom=0,
                    top start=\topstart,
                    top end=\topend,
                    bottom start=\corstart,
                    bottom end=\corend,
                }
            };

            \pic[draw, very thick, line cap=round] at (0, 5) {
                configuration={start=\topstart, end=\topend}
            };
        }

        \foreach \i/\start/\stop in {0/200/225, 1/260/340} {
            \pic[draw, very thick, line cap=round] at (0, 0) {
                configuration={
                    start=\start, end=\stop,
                }
            };
        }

        \draw[->, shorten <=1em, shorten >=1em]
            (-3, 5)
            node {$x$}
            --
            node[midway, left] {$[\psi]$}
            (-3, 0)
            node {$y$};
    \end{tikzpicture}%
    \hspace{.5em}=\hspace{.5em}%
    \begin{tikzpicture}[
            configuration line/.style = {very thick},
            in-equation,
        ]

        \pic {cylinder={x radius=2, y radius=1, bottom=0, top=5}};
        \draw (0, 2) [partial ellipse=180:360:{2 and 1}];
        \draw[dashed] (0, 2) [partial ellipse=0:180:{2 and 1}];

        \foreach \i/\topstart/\topend/\botstart/\botend/\corstart/\corend in {
            0/210/235/200/225/210/220,
            1/260/285/270/290/270/280,
            2/295/325/310/330/300/335%
        } {
            \pic[draw, fill=black, fill opacity=0.25] at (0, 0) {
                flow={
                    top=5,
                    bottom=2,
                    top start=\topstart, top end=\topend,
                    bottom start=\botstart, bottom end=\botend,
                }
            };

            \pic[draw, fill=black, fill opacity=0.25] at (0, 0) {
                flow={
                    top=2,
                    bottom=0,
                    top start=\botstart,
                    top end=\botend,
                    bottom
                    start=\corstart,
                    bottom end=\corend,
                }
            };

            \pic[draw, very thick, line cap=round] at (0, 5) {
                configuration={start=\topstart, end=\topend}
            };
        }

        \foreach \i/\start/\stop in {0/200/225, 1/260/340} {
            \pic[draw, very thick, line cap=round] at (0, 2) {
                configuration={
                    start=\start, end=\stop,
                }
            };
            \pic[draw, very thick, line cap=round] at (0, 0) {
                configuration={
                    start=\start, end=\stop,
                }
            };
        }

        \draw[->, shorten <=1em, shorten >=1em]
            (3, 5)
            node {$x$}
            --
            node[midway, right] {$[\phi]$}
            (3, 2)
            node {$y$};
        \draw[->, shorten <=1em, shorten >=1em]
            (3, 2)
            --
            node[midway, right]
                {$y \WhiskL [\xi]$}
            (3, 0)
            node {$y$};
    \end{tikzpicture}

    \caption{
        The isotopy
        $\xi\colon [0, 1] \times X \to Y$
        between $f_\phi$ and $f_\psi$
        represents a 2-cell in
        $(\mfld^1/S^1)_2([\phi],
[\psi])$.
    }
    \label{fig:mfld-slice:2-cell}
\end{figure}
We stress that the action of 2-cells is
given by the vertical composition with
1-cells that are not arbitrary but 
have to be of the form $y\WhiskL\xi$.
In
Figure~\ref{fig:mfld-slice:2-cell} this means that
for all possible choices of 
$\xi$, the (grey) track of
$y\WhiskL\xi$
will stay within 
$\mathrm{im}\, y \subset S^1$, it
can not freely use all of $S^1$.

One observes by direct inspection
that the paracyclic category 
$ \Para$ (Definition~\ref{paradef}) 
with an initial and a terminal
object added can be realised as a
skeletal subcategory of 
$\ho(\mfld^1/S^1)$:
the object 
$n$ of 
$\Para$ can be identified with any
embedding of $n+1$ intervals into $S^1$,
say
$$
	x_n \colon 
	\bigcup_{j=0}^{n}
	[j,j+1/2]
    \rightarrow 
	S^1, \quad 
    t \mapsto 
	\exp\left(\frac{ 2 \pi i t}{n+1}
    \right).
$$
 
An isotopy 
$\phi$
that represents a 1-cell
$x_n \rightarrow x_m$
in $\mfld^1/S^1$ defines unique 
smooth maps 
$\phi_j \colon 
	[0,1]  
	\to \mathbb{R}$
with 
$$
	\phi (t,x_n(j)) =
	\exp
   (2 \pi i \phi_j(t)),\quad
    \phi_j(0) = \frac{j}{n+1},
    \quad
    j=0,\ldots,n. 
$$
Now  
$ \phi (1,-) \preceq x_m$ 
implies  
that there is a unique morphism 
$f \colon n \to m$ in 
$\Para$ such that  
$$
	\phi_j(1) \in	
	\left[
	\frac{f(j)}{m+1},
	\frac{f(j)+1/2}{m+1} \right],
$$
and the assignment $ \phi \mapsto
f$ induces  
an isomorphism between 
the full subcategory of 
$\ho(\mfld^1/S^1)$ consisting of 
all $x_n$, $0 \le n<\infty$, and 
$\Para$.  
\end{example}

\begin{example}
Analogously, together with the
empty embedding and $ \mathrm{id}
_{[0,1]}$, the simplicial
category $ \simpl $ can be realised
as a skeletal subcategory of 
$\ho(\mfld^1/[0,1])$. 
\end{example}

\begin{example}
When 
$d=3$ and $T=S^3$, then
embeddings of
a solid 3-torus are knots, and the
1-cells between them are given by
isotopies.
\end{example}
\begin{example}\label{annulus}
Consider the 2-dimensional
manifolds 
\begin{align*}
&  
    X := \{ 
    \left({a \atop b}\right)
    \in \mathbb{R} ^2
    \mid \sqrt{ 
    a^2+b^2} \le 1/5\},\\
 & 
    Y := \{ 
    \left({a \atop b}\right)
    \in \mathbb{R} ^2
    \mid  
    1/2 \le \sqrt{a^2+b^2} \le 1\},\\
 & T := \{ 
    \left({a \atop b}\right)
    \in \mathbb{R} ^2
    \mid  
    \sqrt{a^2+b^2} \le 1\} 
\end{align*}
and the embeddings 
\begin{align*}
    f &\colon X \to Y,\quad  
    \left({a \atop b}\right)
    \mapsto 
    \left({a \atop b+3/4}\right),
    \\
    y &\colon Y \to T,\quad
    \left({a \atop b}\right)
    \mapsto
    \left({a \atop b}\right),\\ 
    x &:= y  f \colon X \to T.
\end{align*}
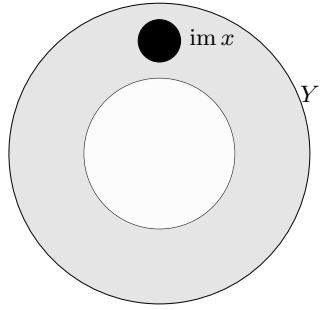
\begin{figure}[h]
\begin{tikzpicture}
   \begin{scope}[thick,font=\scriptsize][set layers]
        \end{scope}
    \draw[solid] (0,-1) circle (1);
    \draw[solid] (0,-1) circle (2);
    \path [draw=none,fill=gray, fill opacity = 0.2] (0,-1) circle (2);
    \path [draw=none,fill=white, fill opacity = 0.9] (0,-1) circle (1);
    \filldraw[black] (0,0.5) circle (8pt);

     \begin{scriptsize}    
\draw[color=black] (0.7,0.55) node
{$\mathrm{im}\, x$};    
\draw[color=black] (2,-0.2) node {$Y$};    
\end{scriptsize} 
\end{tikzpicture}
\caption{A disc embedded in an annulus}
\end{figure}

The
identity 2-cell $
\mathrm{id}_x \colon 
x \To x$ in $\mfld^2$ yields a
1-cell $(f, 
\mathrm{id}_x) \colon x \to y$ 
in the slice
2-category $\mfld^2/T$ and there   
is a nontrivial 2-cell 
$\xi \colon f \To f$ in 
$\mfld^2$ 
which moves the small disc
(resp.~its embedding) once round
the annulus without rotating the
disc itself,
$$
    \xi(t,-) := 
    R_t f R_{-t}
    \colon X 
    \to Y,\quad
    R_t(
    \left( {a \atop b}\right)) := 
    \left({\cos(2 \pi t)a + 
    \sin(2 \pi t)b \atop  
    -\sin (2 \pi t)a + \cos(2 \pi
t)b }\right).    
$$
As $y \WhiskL \xi$ is (isotopic to)
$ \mathrm{id} _x$ in $\mfld^2$, $\xi$ 
satisfies (\ref{eq:slice-2-cat:2-cell-cond})
(with $ \psi = \phi=\mathrm{id} _x$). 
It thus
defines 
a nontrivial 2-cell 
in
$(\mfld^2/T)_2((f,\mathrm{id}_x),(f,\mathrm{id}
_x))$.
This example also shows that 
even when $y$ is monic,
whiskering
on the left with $y$ is not
necessarily injective.
\end{example}

\section{Cyclic duality for orbit 2-categories}\label{4advent}
We now study a certain 
class of 2-thin (2,1)-categories
that generalise the
orbit category of a group, and
sufficient conditions under which 
their homotopy categories are
self-dual. In the subsequent
section, we will realise homotopy
categories of suitable 
slice 2-categories in this way
and thus prove our main theorem.

\subsection{2-groups and crossed
modules}\label{2groups}
Throughout this section, $\G$ denotes 
a (strict) \emph{2-group} (see
e.g.~\cite{MR2068521} for an
excellent account on the concept), 
that is, 
a (2,1)-category
with a single
object $T$ in which all 1-cells are
invertible. Recall that by the 
\emph{Brown-Spencer theorem} 
\cite{MR0419643}, such a 2-group
can be equivalently described as
follows:

\begin{definition}
A \emph{crossed module}  
is a pair of group homomorphisms 
$$
    \target
    \colon G \to A,\quad
    \ad \colon A \to
    \mathrm{Aut}(G)
$$ 
such that for all $h \in A$ and 
$ \gamma, \alpha \in G$, we have 
$$
    \target (\ad (h) (\gamma )) = 
    h \target ( \gamma ) h^{-1},
$$
that is, $\target$ is $A$-equivariant
with respect to the action $\ad$ on
$G$ and the adjoint action on
itself, and the \emph{Peiffer identity}
\begin{equation}\label{peiffer}
 \gamma \alpha \gamma ^{-1} = 
    \target(\gamma ) 
    \alpha \target(\gamma)^{-1}.
\end{equation}
\end{definition}

The crossed module that is
associated to (and describes)
$\G$ has
$$
    A:=\G_1(T,T),
$$
the group of 1-cells, and 
$$
    G:=\bigcup_{g \in A}
    \G_2(\mathrm{id} _T ,g),
$$ 
the so-called \emph{source
group} of $\G$ 
(with
horizontal composition as group
structure, cf.~ 
Remark~\ref{2groupremark}). 
The group homomorphism $\target$ is
given by the target map in $\G$,
and $\ad(h)$ is given by
left and right whiskering with $h$
respectively $h^{-1}$,
$$
    \ad(h)( \gamma ) := h \gamma
h^{-1}.
$$

\begin{example}\label{gsetpo}
Every group $A$ acts on itself by 
conjugation, and taking 
$\target := \mathrm{id} _A$ yields
a crossed module. The corresponding 
2-group has the group $A$ as its 
1-cells and 
for any $ g,h \in A$ a unique 2-cell 
$ g \To h $. 
\end{example}

\subsection{$A$-preorders and
$\G$-categories}\label{exc}
Recall that a \emph{preorder}
is a \emph{thin} category,
i.e.~one with at
most one morphism between any two
objects. We denote such a category
$\cS$ by $(S,\sqsubseteq)$, where
$S:=\cS_0$ is the set of objects 
and $\sqsubseteq$ is the reflexive
and transitive binary relation on
$S$ that a morphism $x \to y$
exists. 
We will write 
$$
    x \sim y :\Leftrightarrow 
    (x \sqsubseteq y \mbox{ and } y
    \sqsubseteq x)
$$ 
if $x,y \in S$ are isomorphic in $\cS$.

\begin{definition}
An \emph{$A$-preorder} is 
a preorder $(S,\sqsubseteq)$ with
an action of $A$ on $S$ such that 
$
    x \sqsubseteq y \Rightarrow 
    gx \sqsubseteq gy 
$
holds for all $x,y \in S$ and 
$ g \in A$. 
\end{definition}

\begin{example}
The set $U_G$ of all
subgroups of $G$ carries 
a natural action of $A$ and hence
defines an 
$A$-preorder
$(U_G,\supseteq)$.
\end{example}

\begin{definition}
By a \emph{$\G$-presheaf} on an
$A$-preorder $(S,\sqsubseteq)$ we
shall mean an $A$-equivariant map
of preorders 
$$
    s \colon (S,\sqsubseteq) 
    \to (U_G,\supseteq),
    \quad
    x \mapsto G_x,
$$
that is, a map 
such that for all $x,y \in S$ and 
all $g \in
A$, we have
\begin{equation}\label{aproug}
	G_{gx} =
	\ad(g)(G_x),
    \quad
    x \sqsubseteq y \Rightarrow 
	G_y \subseteq G_x.
\end{equation}
\end{definition}

Here is a toy example:

\begin{example}\label{gsetpo2}
Let $A=G$ be any group acting on
itself by conjugation
(Example~\ref{gsetpo}), let $S$ be any 
$A$-set, and 
$$
    A_x := \{ g \in A \mid gx = x\}
$$ 
be the isotropy group of $x
\in S$. Then $x \sqsubseteq y :
\Leftrightarrow A_y \subseteq A_x$
turns $S$ into a preorder,
and the $\{A_x\}$ form 
a $\G$-presheaf on it. 
\end{example}

This generalises as follows to
any action of a 2-groups on
a category:

\begin{example}\label{actionsof2groups}
Assume that a 
2-group $\G$
acts on a category 
$\cS$, that is, 
$g \in A$ acts by 
a functor $\cS \to \cS$,
and   
$ \gamma \colon h \To g$ in $\G_2$ 
by a natural transformation 
between the functors given by
$g,h$ 
(more abstractly, we are given
a 2-functor 
$\G \to \mathbf{Cat}$ which 
sends the unique object $T$ of 
$\G$ to $\cS$). If we denote the 
components of these natural
transformations by  
$ \gamma _x \colon hx \to
gx$,  
then 
$$
	G_x := \{ \gamma \in G \mid 
	\gamma _x = \mathrm{id}
	_x\},\quad
    x \sqsubseteq y :
\Leftrightarrow G_y \subseteq G_x
$$
turns $S:=\cS_0$ into an
$A$-preorder equipped with a 
$\G$-presheaf.
\end{example}

In these examples,  
$x \sqsubseteq y$ is  
defined as
$G_y \subseteq G_x$, but in general, 
it is a subrelation. The example
that we will
use to prove our main result
arises in this way as a subobject
of one of the above type:

\begin{example}\label{elmena}
Let $\C$ be a (2,1)-category,
$T \in \C_0$, and $\aut(T)$ be the
\emph{automorphism 2-group} of $T$,
that is, the
2-group of all invertible 1-cells 
$T \to T$ and of all 2-cells
between these. As a special case of
the preceding
Example~\ref{actionsof2groups}, 
$\aut(T)$ 
acts on the category 
underlying the slice 2-category 
$\C/T$,
with the action 
$gx$ of $g \in
A=\aut(T)_1$ on 
$x \in \C/T_0$ given by composition, 
and with 
$ \gamma _x := \gamma \WhiskR x$ ($
\gamma $ whiskered on the right by
$x$). Note that this
$\aut(T)$-action descends to 
$\ho(\C/T)$. As above, we set 
$G_x:= \{\gamma \in G \mid 
\gamma x=x\}$. 
In this way, we obtain 
an $\aut(T)$-presheaf 
$$
    s_{\C/T} \colon
    (\C/T_0, 
    \preceq) \to 
    (U_G,\supseteq)
$$
on the ordinary slice category of
$\C$ over $T$, so here 
$$
    x \preceq y
    :\Leftrightarrow  
    \exists f \in \C_1 : x =
yf. 
$$
We will show that in
good cases, $\ho(\C/T)$ only
depends on 
$s_{\slice}$. 
\end{example}

\begin{remark}\label{paint1}
To paint a more complete picture,
one could define a 2-category
$\gpreord$ of $A$-preorders and
a 2-category $\giso = \gpreord/U_G$
of $\G$-presheaves. 
Example~\ref{actionsof2groups} 
then becomes part of 
the definition of a 2-functor
$\gcat \to \giso$, where
$\gcat$ is the 2-ategory of all
categories with an action of $\G$. 
Note also that $\giso$ does not 
make use of the target map
$\target$ of $\G$, the construction
works for all actions of a group
$A$ on another group $G$. 
As this is not directly relevant
for our main problem, we decided
not to work this out here though. 
\end{remark}

\subsection{Orbit categories}
Let $s \colon (S,\sqsubseteq) 
\to (U_G,\supseteq)$ be 
a $\G$-presheaf.

\begin{remark}
The group $G$ acts 
via the target map 
$\target \colon G \to A$ on $S$,
and to avoid confusion, we denote
this action by 
$$
    \gamma \gact x :=
\target(\gamma ) x,\quad
    \gamma \in G,x \in S. 
$$
Note that the Peiffer identity
(\ref{peiffer}) implies 
\begin{equation}
    \label{peiffer2}
    G_{\gamma \gact x} = 
    G_{\target(\gamma )x} = 
    \ad(\target(\gamma )) (G_x) =
    \gamma G_x \gamma^{-1}. 
\end{equation}
Here and
later, we freely use that the 
product 
$$
	LR:=\{\alpha \beta \in G
	\mid \alpha \in L,\beta \in R\}
$$
turns the power
set $P(G)$ of $G$ into a monoid with 
unit element $\{1\}$. Note that 
subgroups of $G$ are idempotent
elements of $P(G)$.   
Expressions such as $ \gamma G_x
\gamma ^{-1}$ are a shorthand
notation for $ \{\gamma \} G_x 
\{\gamma ^{-1}\}$.  
\end{remark}
As we will explain in
Example~\ref{santoe} below, the 
following construction generalises 
the (dual of
the) \emph{orbit category} of a
group
(see e.g.~\cite[Section I.10]{MR889050}).
We will
denote the composition of morphisms
using
$\horiz$ to distinguish it
from the product of
elements or subsets of $G$ which we
continue to denote simply by
concatenation. Note also that 
$G/G_x$ refers to the set of cosets 
$ \gamma G_x$ of the subgroup
$G_x$, not a slice 2-category.

\begin{proposition}\label{o2cat}
The following defines a
$\G$-category $\O_s$:
\begin{itemize}
\item the set of objects is
$\O_{s,0}:=S$,
\item the morphism sets 
are $\O_{s,1}(x,y) := 
	\{\gamma G_x \in 
	G/G_x \mid 
	\gamma \gact x \sqsubseteq y\}
$, 
\item the composition is induced
by the product in $G$; that is, 
if $ \gamma G_x \colon x
\to y, \delta G_y \colon y
\to z$ are morphisms, then
$$
    (\delta G_y) \horiz (\gamma G_x) :=
    \delta \gamma G_x,
$$
\item the functor given by $
h \in A$ acts on objects via the
original action on $\O_{s,0} =S$ 
and on morphisms by 
$$
    \O_{s,1} (x,y) \to 
    \O_{s,1}(hx,hy) ,\quad
    \gamma G_x
    \mapsto 
    (h \gamma h^{-1} )
    G_{hx},
$$
\item and the natural transformation
assigned to  
$ \gamma \in G$ has 
components   
$\gamma_x := 
\gamma G_x
\colon x \to \gamma \gact x$.
\end{itemize}
\end{proposition}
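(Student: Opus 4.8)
The plan is to unwind the definition of a $\G$-category as a (strict) $2$-functor $\G \to \cats$, as in Example~\ref{actionsof2groups}. Since $\G$ has the single object $T$, such a $2$-functor is exactly the data of a category $\O_s$, a functor $F_h \colon \O_s \to \O_s$ for each $1$-cell $h \in A$ with $F_{hh'} = F_h F_{h'}$ and $F_1 = \id$, and, for each $2$-cell $\gamma \in G = \bigcup_g \G_2(\id_T, g)$, a natural transformation $F_1 \To F_{\target(\gamma)}$, all compatibly with the horizontal and vertical composition of $2$-cells. I would verify these three layers in turn, checking first that $\O_s$ is a category at all, then that $h \mapsto F_h$ is an action by functors, and finally that $\gamma \mapsto (\gamma_x)_x$ is a compatible family of natural transformations.

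The one genuinely delicate point, and the step I expect to be the main obstacle, is well-definedness on cosets. For the morphism sets I must check that the predicate $\gamma \gact x \sqsubseteq y$ depends only on the coset $\gamma G_x$, not on the chosen representative; since $\gamma \gact x = \target(\gamma) x$, this comes down to $G_x$ fixing $x$ under $\gact$, i.e.\ $\target(\beta) x = x$ for $\beta \in G_x$, after which $A$-equivariance of $\sqsubseteq$ does the rest. Well-definedness of the composition $(\delta G_y) \horiz (\gamma G_x) = \delta\gamma G_x$ is the sharper check: replacing $\gamma$ by $\gamma\beta$ with $\beta \in G_x$ is immediate, whereas replacing $\delta$ by $\delta\epsilon$ with $\epsilon \in G_y$ requires $\gamma^{-1}\epsilon\gamma \in G_x$. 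This is where the order relation enters: from $\gamma \gact x \sqsubseteq y$ and the presheaf inequality in \eqref{aproug} combined with \eqref{peiffer2} one gets $G_y \subseteq G_{\gamma\gact x} = \gamma G_x \gamma^{-1}$, which yields exactly $\gamma^{-1}\epsilon\gamma \in G_x$. I would also confirm that $\delta\gamma G_x$ really lands in $\O_{s,1}(x,z)$, using $A$-equivariance of $\sqsubseteq$ to pass from $\gamma\gact x \sqsubseteq y$ to $\target(\delta)(\gamma\gact x) \sqsubseteq \target(\delta) y = \delta\gact y \sqsubseteq z$ and then applying transitivity. Associativity and the unit $\id_x = G_x$ are then immediate from the monoid structure of $G$.

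For the $A$-action I would check that each $F_h$ is a well-defined functor. Coset-independence of $\gamma G_x \mapsto \ad(h)(\gamma) G_{hx}$ uses the equivariance $\ad(h)(G_x) = G_{hx}$ from \eqref{aproug}, and the target condition $\ad(h)(\gamma) \gact hx \sqsubseteq hy$ follows by rewriting $\target(\ad(h)(\gamma))\, hx = h \target(\gamma) h^{-1} h x = h(\gamma \gact x)$ through the crossed-module relation $\target(\ad(h)(\gamma)) = h\target(\gamma)h^{-1}$ and then using $A$-equivariance of $\sqsubseteq$. Functoriality of $F_h$ together with $F_{hh'} = F_h F_{h'}$ and $F_1 = \id$ reduce to $\ad$ being a group homomorphism.

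Finally, for the $2$-cells I would set $\gamma_x := \gamma G_x \colon x \to \gamma\gact x$ and verify naturality against a morphism $\delta G_x \colon x \to x'$. Writing $g = \target(\gamma)$, the two legs of the naturality square compute to the cosets $\gamma\delta G_x$ and $(g\delta g^{-1})\gamma G_x$, so naturality is the single identity $\target(\gamma)\,\delta\,\target(\gamma)^{-1}\gamma = \gamma\delta$; this is precisely the Peiffer identity \eqref{peiffer}, and I regard recognising this as the conceptual heart of the argument. Compatibility with the $2$-group structure is the last step: vertical composites are trivial to handle (all sources are $\id_T$), while the horizontal composite of $2$-cells in $\G$ is the product in $G$, and matching it with the Godement composite of the associated natural transformations is another coset computation relying again on \eqref{peiffer} and on $\ad$ being a homomorphism. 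Everything apart from the two well-definedness checks and the naturality-via-Peiffer identity is routine bookkeeping.
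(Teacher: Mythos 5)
Your proposal is correct and follows essentially the same route as the paper: the key inclusion $G_y \subseteq G_{\gamma\gact x} = \gamma G_x\gamma^{-1}$ (from $\gamma\gact x\sqsubseteq y$ together with \eqref{aproug} and \eqref{peiffer2}) drives well-definedness of the composition, $A$-equivariance and transitivity of $\sqsubseteq$ show the composite lands in $\O_{s,1}(x,z)$, and naturality of $\gamma_x$ is exactly the Peiffer identity \eqref{peiffer}. The paper packages the first point slightly more efficiently by proving the subset identity $\delta G_y\,\gamma G_x = \delta\gamma G_x$ in the monoid $P(G)$, which yields representative-independence and associativity in one stroke, but this is the same computation as your element-wise check $\gamma^{-1}\epsilon\gamma\in G_x$.

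One caveat: your claim that coset-independence of the membership predicate $\gamma\gact x\sqsubseteq y$ reduces to $\target(\beta)x=x$ for $\beta\in G_x$ is not something you can verify from the axioms of a $\G$-presheaf. Nothing in \eqref{aproug} forces $\target(G_x)$ to fix $x$; the axioms only give $G_{\beta\gact x}=\beta G_x\beta^{-1}=G_x$, and since $\sqsubseteq$ is in general strictly finer than the relation $G_y\subseteq G_x$, this does not yield $\beta\gact x\sim x$. The fixing property does hold in the motivating Examples \ref{gsetpo2}, \ref{actionsof2groups} and \ref{elmena}, but in the abstract setting you should either read $\O_{s,1}(x,y)$ as the set of cosets admitting a representative with $\gamma\gact x\sqsubseteq y$ --- under which reading your check is unnecessary, because the subset identity above and the verification that $(\delta\gamma)\gact x\sqsubseteq z$ only ever invoke one good representative of each coset --- or impose that property as an additional hypothesis. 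The paper silently takes the former route, which is why its proof never addresses this point.
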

\begin{proof}
Note
first that the composition 
is equal to the
multiplication of subsets of $G$:
$\gamma \gact x \sqsubseteq y$
implies 
$G_y \subseteq G_{\gamma \gact x} =
\gamma G_x \gamma ^{-1}$ so that 
$$ 
    \delta G_y \gamma G_x \subseteq 
\delta \gamma G_x \gamma ^{-1} 
\gamma G_x = \delta \gamma G_xG_x=
\delta \gamma G_x
    = 
    (\delta G_y) \horiz 
    (\gamma G_x). 
$$
The reverse
inclusion 
holds as $1 \in G_y$,
so $ \delta \gamma G_x \subseteq 
\delta G_y \gamma G_x$. 

It follows that the definition 
of $\delta G_y \horiz 
\gamma G_x$ is independent of the
choice of representatives $ \delta
, \gamma $ and also that 
$\horiz$ is associative. 

Furthermore, 
$ \delta \gact y \sqsubseteq z$ and 
$\gamma \gact x
\sqsubseteq y$ together
imply 
$ 
(\delta \gamma ) \gact x
\sqsubseteq \delta \gact y$, hence
$ 
(\delta \gamma ) \gact x
\sqsubseteq 
\delta \gact y 
\sqsubseteq z 
$. Thus 
$ \delta \gamma G_x$ is
a morphism $x \to z$ in $\O_s$. 

That the $\G$-action is
well-defined is verified
straightforwardly -- for the
naturality of the transformations $
\gamma _x$ use the Peiffer identity
(\ref{peiffer}); note also that 
$ \gamma _x$ is an isomorphism with
inverse $ \gamma ^{-1} G_{\gamma
\gact x}$. 
\end{proof}

As 
an immediate consequence of the
definition, we have:

\begin{corollary}
All $\gamma G_x \in \O_{s,1} (x,y)$
are monic.
\end{corollary}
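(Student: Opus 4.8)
The plan is to unwind what it means for a morphism in $\O_s$ to be monic and reduce it to the injectivity of left translation in the group $G$. Recall that a morphism $\gamma G_x \in \O_{s,1}(x,y)$ is monic precisely when, for every object $w$, precomposition
$$
    (\gamma G_x) \horiz (-) \colon \O_{s,1}(w,x) \to \O_{s,1}(w,y)
$$
is injective. So first I would fix $w$ together with two parallel morphisms $\alpha G_w, \beta G_w \in \O_{s,1}(w,x)$, that is, cosets with $\alpha \gact w \sqsubseteq x$ and $\beta \gact w \sqsubseteq x$.

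Next I would apply the composition rule from Proposition~\ref{o2cat}: matching the roles in the formula $(\delta G_y)\horiz(\gamma G_x) := \delta\gamma G_x$, the two composites are $(\gamma G_x)\horiz(\alpha G_w) = \gamma\alpha G_w$ and $(\gamma G_x)\horiz(\beta G_w) = \gamma\beta G_w$, both regarded as cosets in $G/G_w$. Assuming these agree gives $(\gamma\alpha)^{-1}(\gamma\beta) \in G_w$, which simplifies to $\alpha^{-1}\beta \in G_w$ and hence $\alpha G_w = \beta G_w$, as desired.

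The essential content is therefore just the cancellation property of $G$: left multiplication by $\gamma$ is a bijection of $G$ that descends to a bijection of the coset set $G/G_w$, and composing on the left with $\gamma G_x$ is exactly this operation. I do not expect any real obstacle here; the only point requiring a moment's care is that the composite is expressed as a coset of the source group $G_w$ rather than of $G_x$, but independence of the chosen representatives was already settled in the proof of Proposition~\ref{o2cat}, so nothing new arises. This is why the statement is flagged as an immediate consequence of the definition.
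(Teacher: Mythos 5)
Your proof is correct and matches what the paper intends: the paper offers no written proof, calling the corollary an immediate consequence of the definition, and your argument (composition is left multiplication by $\gamma$ on cosets of $G_w$, which is cancellable) is exactly the computation being left implicit. The only cosmetic point is that the map $(\gamma G_x)\horiz(-)\colon \O_{s,1}(w,x)\to\O_{s,1}(w,y)$ is usually called post-composition with $\gamma G_x$ rather than precomposition, but the map you analyse is the right one.
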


\begin{example}\label{santoe}
Consider $G=A$ as in
Example~\ref{gsetpo} and 
$$ 
    s=\mathrm{id} _{U_A} \colon 
    U_A \to U_A,\quad
    x \mapsto A_x=x.
$$
In this case, $\O_{\mathrm{id}
_{U_A}}$ has the subgroups $x
\subseteq A$ as objects, and 
$$
    \O_{s,1}(x,y) = 
\{g x \in A/x \mid 
y \subseteq gxg^{-1}\}.
$$
Note 
$ gx $ is a coset of the subgroup
$x$. 
The \emph{orbit category} (see
e.g.~\cite[Section I.10]{MR889050}) of
$A$ is the category whose objects
are the coset spaces 
$A/x$, $ x \in U_A$, and whose
morphisms  
$g \colon A/y \to A/x$ are 
$A$-equivariant maps. Such a map
$g$ is
uniquely determined by its value on 
$y=1y \in A/y$, and an element 
$ g x \in A/x$ occurs as such
a value if and only if $ y
\subseteq gxg ^{-1}$.
Thus $\O_s$ is (isomorphic to) the
dual of the orbit category. 
\end{example}

Thus we define in full generality:

\begin{definition}
We call $\O_s^\degree$ the
\emph{orbit category} of 
$s$.
\end{definition}

We will show later that 
under the assumptions 
in our main theorem, the
category underlying $\C/T$ 
is isomorphic to
$\O_{s_{\C/T}}$ (recall 
Example~\ref{elmena}).
This is why we focus on $\O_s$
rather than its dual. 

\begin{remark}\label{paint2}
Expanding Remark~\ref{paint1}, 
the assignment $s \mapsto \O_s$ 
can be made part of a 2-functor 
$\giso \to \gcat$.
The construction
from Example~\ref{actionsof2groups} 
almost recovers $s$ from $\O_s$,
only the relation
$\sqsubseteq$ is replaced by the
potentially weaker one meaning 
$G_y \subseteq G_x$. So there is a
natural transformation from the
identity 2-functor on $\giso$ to the
composition of the two
constructions. As far as we can
see, this is in general not part of a
2-adjunction: starting with any
$\G$-category $\cS$ and defining 
$s$ as in
Example~\ref{actionsof2groups}, 
the morphisms in the resulting 
$\G$-category $\O_s$ are generated
by the $ \gamma _x$ together with
virtual embeddings $x \to y$ 
whenever $G_y \supseteq G_x$. These 
might not correspond to any actual
morphisms in $\cS$, and, conversely,
there might by morphisms in $\cS$
that are entriely unrelated to the
$\G$-action (e.g.~when $\G$ is
nontrivial but acts trivially on
$\cS$). However, our main result
roughly says that for 
$\cS=\C/T$ with the necessary  
transitivity conditions added in
the form of the homotopy extension
property discussed below, $\cS$ can be
reconstructed from $\O_s$. So we
believe there is a class of
well-behaved $\G$-categories for
which the two 2-functors form a
split adjunction of suitable
2-categories. 
\end{remark}

\subsection{Self-dual $A$-preorders}
The aim of the remainder of
Section~\ref{4advent} is to upgrade 
$\O_s$ to a (2,1)-category with a self-dual
homotopy category. 
In order to do so, we need to
assume the presence of two
additional structures on the 
underlying $A$-preorders. The
first one is an $A$-equivariant 
self-duality:

\begin{definition}
An \emph{$A$-self-duality} on an
$A$-preorder $(S,\sqsubseteq)$ 
is a map 
$$
	S \rightarrow S,
	\quad
	x \mapsto x^\degree
$$
such that for all $x,y \in S$ and
$g  \in A$, we have 
\begin{equation}\label{scholz}
	x \sim x^{\degree\degree},\quad
	x \sqsubseteq y \Leftrightarrow	
	y^\degree \sqsubseteq x^\degree,\quad
	(gx)^\degree \sim  
    g (x^\degree).
\end{equation}
\end{definition}

Here is the main 
example that we have in mind:

\begin{example}\label{polityka}
Consider  
$s_{\C/T}$ (Example~\ref{elmena}) 
with $\C=\mfld^d$, so
$(\C/T)_0$ consists of 
all embeddings
$x \colon X \to T$ of a manifold
$X$ into a compact 
manifold $T$ of the same
dimension $d$. The preorder
relation $ x \preceq y
\Leftrightarrow \exists f : x=yf$ 
means that 
$\mathrm{im}\, x
\subseteq \mathrm{im}\, y$, and 
if $T$ has empty boundary, then 
the inclusion $x^\degree$ of the 
closure of the
complement $T\setminus
\mathrm{im}\, x$ into $T$ is 
a $\mathrm{Diff}(T)$-self-duality.
\end{example}

Bear in mind though that our
setting is quite general.
In particular, $x^\degree$ is
in general not a
complement in most of the standard meanings
of the word. Here is a somewhat
fake example which shows amongst
other things that  
$x$ and $x^\degree$ do  
not need to be jointly epic:

\begin{example}
Let $\C$ be the (2,1)-category
whose objects are the intervals 
of the form $(-\infty,t]$ and 
$[s,\infty)$, $s,t \in \mathbb{R}
$, plus $\emptyset$ and
$T=\mathbb{R} $,
whose 1-cells are inclusions
(so $\C$ is a preorder
and in fact a poset), and all of
whose 2-cells
are identities (so $G=A$ is trivial).
Then 
$$
    [s,\infty)^\circ :=
(-\infty,s-1],\quad
    (-\infty,t] ^\circ := 
    [t+1,\infty)
$$  
and
$$
    [s,\infty)^\bullet :=
(-\infty,s+1],\quad
    (-\infty,t] ^\bullet := 
    [t-1,\infty)
$$  
both define self-dualities on the
preorder 
$(\C/T_0,\preceq)$, but 
we have 
$$ 
    \mathrm{im}\, x \cup
    \mathrm{im}\, x^\circ \neq 
\mathbb{R} ,\quad   
    \mathrm{im}\, x \cap
\mathrm{im}\, x^\bullet \neq
\emptyset.
$$ 
\end{example}

Finally, here are two examples of
a very different nature:

\begin{example}
Let $G=A$ be any group, 
viewed as a 2-group as in
Example~\ref{gsetpo}, and 
let $S$ be any $A$-set. If $H \lhd
G$ is a normal subgroup, setting $
G_x := H$ for all $x \in S$ and
$x \sim y$ for all $x,y$ defines a
$\G$-presheaf, and 
$x^\degree:=x$ is an $A$-self-duality. 
\end{example}

\begin{example}
Let again $G=A$ be any group, $H,K$
be normal subgroups, and 
$S:=\{H,K \}$ with trivial
$G$-action, and set $G_x:=x$,
$x \sqsubseteq y :\Leftrightarrow 
G_x \supseteq G_y$. Then
$H^\degree:=K,K^\degree:=H$ defines 
an $A$-self-duality.
\end{example}

In this example, $G_{x^\degree}$ is not
necessarily contained in the
centraliser $Z_G(G_x)$, but 
note that we always have:

\begin{lemma}\label{dieampel}
If $s$ is a $\G$-presheaf and 
$ \degree $ is a self-duality
on the underlying $A$-preorder,  
then we have 
$$
    N_G(G_x) = N_G(G_{x^\degree}).
$$ 
In
particular, 
$G_x \subseteq N_G(G_{x^\degree})$.
\end{lemma}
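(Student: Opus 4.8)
The plan is to move the statement from the source group $G$, where the normalizers live, to the preorder $(S,\sqsubseteq)$, where the self-duality acts, using that conjugation in $G$ is recorded by the action $\gact$. First I would rewrite both normalizers in terms of objects of $S$. For $\gamma\in G$ the Peiffer consequence (\ref{peiffer2}) gives $\gamma G_x\gamma^{-1}=G_{\gamma\gact x}$, so
\begin{equation*}
    N_G(G_x)=\{\gamma\in G\mid G_{\gamma\gact x}=G_x\},\qquad
    N_G(G_{x^\degree})=\{\gamma\in G\mid G_{\gamma\gact x^\degree}=G_{x^\degree}\}.
\end{equation*}
Writing $g:=\target(\gamma)$, so that $gx=\gamma\gact x$ and $gx^\degree=\gamma\gact x^\degree$, and applying the equivariance $(gx)^\degree\sim g\,x^\degree$ from (\ref{scholz}) together with the observation that $a\sim b$ forces $G_a=G_b$ (apply the monotonicity in (\ref{aproug}) to both $a\sqsubseteq b$ and $b\sqsubseteq a$), I obtain $G_{\gamma\gact x^\degree}=G_{(\gamma\gact x)^\degree}$. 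Hence the second set may be rewritten as $\{\gamma\mid G_{(\gamma\gact x)^\degree}=G_{x^\degree}\}$, and it suffices to show, for every $\gamma$, the equivalence $G_{\gamma\gact x}=G_x\Leftrightarrow G_{(\gamma\gact x)^\degree}=G_{x^\degree}$.

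Next I would interpose the relation $\sim$. The order-reversal in (\ref{scholz}), applied to both inequalities of $a\sim b$, shows that $\degree$ preserves $\sim$, so $\gamma\gact x\sim x$ implies $(\gamma\gact x)^\degree\sim x^\degree$ and, using $x^{\degree\degree}\sim x$, conversely. Combined with the fact that $\sim$ forces equal presheaf values, this reduces the displayed equivalence to the single statement
\begin{equation*}
    G_{\gamma\gact x}=G_x\quad\Longleftrightarrow\quad \gamma\gact x\sim x,
\end{equation*}
applied once to $x$ and once to $x^\degree$.

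The implication $\Leftarrow$ is immediate, since $\sim$ forces equal presheaf values. The converse is the main obstacle: the monotonicity of $s$ in (\ref{aproug}) runs only in the direction $\sqsubseteq\Rightarrow\supseteq$, so a priori $G_{\gamma\gact x}=G_x$ need not force $\gamma\gact x\sim x$. It does hold precisely when $\sqsubseteq$ is the preorder induced by $s$, i.e.\ when $x\sqsubseteq y\Leftrightarrow G_y\subseteq G_x$, which is how the presheaves of Examples~\ref{gsetpo2} and~\ref{actionsof2groups} are built. Under this compatibility the stabilizer sets $\{\gamma\mid\gamma\gact x\sim x\}$ and $\{\gamma\mid\gamma\gact x^\degree\sim x^\degree\}$ agree, the displayed equivalences close, and $N_G(G_x)=N_G(G_{x^\degree})$ follows; the \emph{in particular} is then a consequence of $G_x\subseteq N_G(G_x)$. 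I would therefore either add this compatibility to the hypotheses or verify it in each application, since on a preorder that is strictly finer than the one induced by $s$ the converse implication — and with it the equality of normalizers — can genuinely fail.
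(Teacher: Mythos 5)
Your reduction is essentially the paper's own: the entire published proof is the one\nobreakdash-line chain $G_{x^\degree}=G_{(\gamma\gact x)^\degree}=G_{\gamma\gact x^\degree}$, asserted for every $\gamma$ with $G_{\gamma\gact x}=G_x$ and followed implicitly by $G_{\gamma\gact x^\degree}=\gamma G_{x^\degree}\gamma^{-1}$ from (\ref{peiffer2}). The second equality is the one you justify from $(gx)^\degree\sim g(x^\degree)$ and monotonicity; the \emph{first} equality---which needs $G_a=G_b$ to force $G_{a^\degree}=G_{b^\degree}$---is cited to (\ref{scholz}) and (\ref{peiffer2}) with no further argument, so the paper does not address the point you isolate. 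Your scepticism is warranted: under the stated hypotheses alone the conclusion can fail. Take $A=G=S_3$ with $\target=\id$ as in Example~\ref{gsetpo}, let $S$ be two disjoint copies of $S_3$ with the left regular action and the discrete preorder, set $G_a:=a\langle(13)\rangle a^{-1}$ on the first copy and $G_a:=a\langle(12)\rangle a^{-1}$ on the second, and let $\degree$ swap the copies identically. Every axiom of a $\G$-presheaf and of an $A$-self-duality holds (the order conditions are vacuous), yet for $x$ the identity element of the first copy one gets $N_G(G_x)=\langle(13)\rangle\neq\langle(12)\rangle=N_G(G_{x^\degree})$; the ``in particular'' clause and Corollary~\ref{hast} fail too, since $\langle(13)\rangle\langle(12)\rangle$ has four elements and is not a subgroup of $S_3$. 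So your additional hypothesis is genuinely needed, and your argument correctly closes the gap once it is imposed: it suffices that $G_a=G_b\Rightarrow a\sim b$ (for $a,b$ in the same $\target(G)$-orbit), which follows when $\sqsubseteq$ reflects the inclusions of the $G_x$. That holds by construction in Examples~\ref{gsetpo2} and~\ref{actionsof2groups}, and for $s_{\mfld^d/T}$ because an isotopy supported in a small ball around a point of $\mathrm{im}\,x\setminus\mathrm{im}\,y$ lies in $G_y$ but not in $G_x$; so your ``verify it in each application'' option covers every use the paper makes of the lemma.
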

\begin{proof}
By (\ref{scholz}) and
(\ref{peiffer2}), $G_x =
G_{\gamma \gact x}
$ implies 
\begin{equation*}
G_{x^\degree} = 
G_{(\gamma \gact x)^\degree} =G_{\gamma
\gact x^\degree}.\qedhere 
\end{equation*}
\end{proof}
\begin{corollary}\label{hast}
$G_xG_{x^\degree} =
G_{x^\degree}G_x$ is a subgroup of $G$. 
If, in addition, we have $G_x \cap
G_{x^\degree}=\{1\}$, then 
$G_x G_{x^\degree} \cong 
G_x \times G_{x^\degree}$. 
\end{corollary}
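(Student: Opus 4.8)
The plan is to derive both assertions from Lemma~\ref{dieampel} by purely group-theoretic means. First I would extract from the Lemma that $G_x$ and $G_{x^\degree}$ normalise one another: one always has $G_x \subseteq N_G(G_x)$ and $G_{x^\degree} \subseteq N_G(G_{x^\degree})$, and since the Lemma gives $N_G(G_x) = N_G(G_{x^\degree})$, it follows that $G_x \subseteq N_G(G_{x^\degree})$ and, symmetrically, $G_{x^\degree} \subseteq N_G(G_x)$.

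For the first claim I would use the standard fact that, for subgroups $H, K \subseteq G$, the product $HK$ is a subgroup precisely when $HK = KH$, and that $H \subseteq N_G(K)$ already forces $HK = KH$, because $hK = Kh$ for every $h \in H$. Applying this with $H = G_x$ and $K = G_{x^\degree}$ together with the inclusion from the previous paragraph yields $G_x G_{x^\degree} = G_{x^\degree} G_x$, so this common product is a subgroup of $G$.

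For the second claim I would invoke the internal direct product criterion. Because each of $G_x$ and $G_{x^\degree}$ is normalised by both factors, each is normal in the subgroup $G_x G_{x^\degree}$. Together with the hypothesis $G_x \cap G_{x^\degree} = \{1\}$, the usual commutator argument shows that elements of the two factors commute: for $\gamma \in G_x$ and $\delta \in G_{x^\degree}$, the commutator $\gamma \delta \gamma^{-1} \delta^{-1}$ lies in $G_x \cap G_{x^\degree} = \{1\}$. Hence the multiplication map $G_x \times G_{x^\degree} \to G_x G_{x^\degree}$ is a group isomorphism. The only point requiring care is that normality must be checked inside the product $G_x G_{x^\degree}$ and not merely inside $G$; this is exactly what the equality of normalisers in Lemma~\ref{dieampel} supplies, and beyond that the argument is routine.
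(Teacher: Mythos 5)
Your argument is correct and is exactly the (omitted) reasoning the paper intends: the corollary is stated as an immediate consequence of Lemma~\ref{dieampel}, whose equality of normalisers gives $G_x \subseteq N_G(G_{x^\degree})$ and $G_{x^\degree} \subseteq N_G(G_x)$, from which $G_xG_{x^\degree}=G_{x^\degree}G_x$ is a subgroup and, under trivial intersection, the internal direct product criterion applies. Your care in checking normality inside the product rather than in all of $G$ is precisely the right point to flag.
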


\subsection{$A$-cosieves}
The second ingredient we use for
the (2,1)-upgrade of $\O_s$ is 
a \emph{cosieve} in the 
$A$-preorder underlying $s$ 
(recall that a cosieve in a category is 
just a set of morphisms closed
under postcomposition with any
morphism). This provides an
abstract concept of an ``interior''
of a subobject; like the
self-duality it should 
be compatible the $A$-action:

\begin{definition}
A binary relation $\pp$ is 
an \emph{$A$-cosieve} in 
the $A$-preorder $(S,\sqsubseteq)$ 
if for all 
$x,y,z \in S$ with $x \pp y$ and 
all $ g \in A$, we have
$$
    x \sqsubseteq y,\qquad   
    gx \pp 
    gy,\qquad
    y \sqsubseteq z
    \Rightarrow x \pp z.
$$
\end{definition}

Note that this implies:

\begin{lemma}\label{lucy}
If $y \sim z$, then
$ x \pp y \Leftrightarrow 
x \pp z$.
\end{lemma}

Again, we first consider the
example that motivates the
definition:

\begin{example}\label{polityka2}
Consider  
$s_{\mfld^d/T}$ 
(Example~\ref{elmena},
Example~\ref{polityka}). 
Then the relation $ x \ppp y$ 
($x \colon X \to T,y \colon Y \to
T$ embeddings of 
manifolds) that 
$ \mathrm{im}\, x$ is contained in
the interior of $ \mathrm{im}\, y$
(i.e.~the boundary of 
$\mathrm{im}\, x$ does not
intersect the boundary of 
$ \mathrm{im}\, y$) is a
$\mathrm{Diff}(T)$-cosieve. 
\end{example}

\begin{example}\label{legon}
Any self-duality 
defines a cosieve 
$$ 
    x \pp_\degree y : \Leftrightarrow  
   (x \sqsubseteq y \mbox{ and }
     \langle G_{y^\degree} \cup G_x \rangle
=G),
$$
where the right hand side denotes
the fact that $G_{y^\degree}$ and
$G_x$
together generate $G$ as a group.
In the preceding 
Example~\ref{polityka2} that we are
most interested in, this
is, however, a stricter
relation than $\ppp$: if 
$ \mathrm{im}\, x$ is not properly
contained in the interior of 
$ \mathrm{im}\, y$, then it has a
nontrivial intersection with 
$\mathrm{im}\, y^\degree$, the
closure of the complement of 
$ \mathrm{im}\, y$ in $T$. Thus
this intersection contains at least
one point $p$ which is fixed
by all elements of 
$\langle G_{y^\degree} \cup 
G_x \rangle$ and in particular by
their codomains $ g \colon T \to T$. 
 However, for each point in a
manifold there is some
diffeomorphism that is isotopic to
$\mathrm{id} _T$ and that moves this
point, so 
$ \langle G_{y^\degree} \cup 
G_x \rangle \subsetneq G$.
Therefore,  we have 
$ x \pp_\degree y \Rightarrow x
\ppp
y$, but the converse does not hold
in general.   
In particular,
if $\mathrm{im}\, x \subset T=S^1$ is a
nonempty proper submanifold, then 
$G_x$ consists of isotopy classes
of isotopies $ \gamma \colon 
\mathrm{id} _{S^1} \To g \in 
\mathrm{Diff}(S^1)$ with 
$ \gamma \WhiskR x = \mathrm{id}
_x$, so $ g $ belongs 
to the subgroup
$\mathrm{Diff}(S^1)_x$ of
diffeomorphisms that restrict
to the identity on $ \mathrm{im}\,
x$. Since the complement of $
\mathrm{im}\, x$ is contractible and
we consider isotopy classes of
isotopies rather than isotopies
themselves, such $ \gamma $ are
uniquely determined by $g$. That
is,  
$ \target \colon G_x \to \mathrm{Diff}(S^1)_x$
is a group isomorphism.
So if $x \ppp y$ for $ \mathrm{im}\,
y \subsetneq S^1$, then 
$ \langle G_{y^\degree} \cup 
G_x \rangle $ 
does not contain the 2-cell 
$ \mathrm{id} _{S^1} 
\To \mathrm{id} _{S^1}$ that is
represented by the isotopy
\begin{equation}\label{the2cell}
    [0,1] \times 
    S^1 \to S^1,\quad
    (t,e^{2 \pi i s}) \mapsto 
    e^{2 \pi i (s+t)}
\end{equation}
that rotates the entire circle
once, so we do not have 
$x \pp_\degree y$.
\end{example}

Here is an abstract algebraic
example that illustrates that the
behaviour of $ \pp_\degree $ 
can be very different
from what one might expect:

\begin{example}
Let $A=G$ be a group viewed as a
2-group (Example~\ref{gsetpo}) that 
acts trivially
on a set $S$. If $\{A_x\}_{x \in
S}$ is a family of normal subgroups 
with $A_x
\subseteq A_y \Leftrightarrow x=y$,
then we obtain an $A$-preorder with
a $\G$-presheaf 
by setting 
$ x \sqsubseteq y
:\Leftrightarrow x=y$, and an
$A$-self-duality by setting
$x^\degree:=x$ for
all $x$. 
Assume furthermore that for any 
$x \neq y$, $\langle A_x \cup A_y
\rangle = A$. As a concrete 
example, we can take
$A=\mathbb{Z}$, $S$ the set of
prime numbers, and $A_x= \langle x
\rangle $ the group of all
integers divisible by $x$. 
Then there are no $x,y \in S$ with 
$ x \pp_\degree y$ at all. 
\end{example} 

Before we move on to the
construction of the (2,1)-category 
$\O_s$, we briefly discuss for 
the example
$(\C/T_0,\preceq)$ the 
close relation between
$\mathrm{Aut}(T)$-cosieves 
in the $\mathrm{Aut}(T)$-preorder and
cosieves in $\C$ itself:
  
\begin{proposition}
If $\ppp$ is an 
$\mathrm{Aut}(T)$-cosieve in
$(\C/T_0,\preceq)$, then 
\begin{align*}
    S_{\ppp} 
&:= \{
    f \in \C_1 \mid 
    \forall y \in \C_1 :
    \source(y) = \target (f)
    \Rightarrow 
    yf \ppp y\},\\
    \bar S_\ppp
&:= \{ hf \mid 
    h \colon Y \to Z,
    f \colon X \to Y, 
    \mbox{ and } \exists y
    \colon Y \to T : yf \ppp y\}. 
\end{align*}
are cosieves $S_\ppp \subseteq \bar
S_\ppp$ in $\C$. Conversely,
if $S$ is any cosieve in $\C$, then 
$$
    x \ppp_S y : \Leftrightarrow 
    \exists f \in S : x = yf 
$$ 
is an $\mathrm{Aut}(T)$-cosieve, 
and we have 
$S_{\ppp_S}=\bar S_{\ppp_S} = S$ as well as
$$
x \ppp_{S_{\ppp}} y \Rightarrow x
\ppp y \Rightarrow 
    x \ppp_{\bar S_{\ppp}} y.
$$
\end{proposition}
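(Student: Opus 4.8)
The plan is to treat the four assertions in turn, reducing each to the defining axioms of a cosieve (closure under postcomposition) and of an $\Aut$-cosieve, with the monicity of all 1-cells (Assumption~\ref{eins}) doing the real work only in the ``round trip'' identity $S_{\ppp_S}=\bar S_{\ppp_S}=S$.

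First I would show $S_\ppp$ is a cosieve. Given $f\in S_\ppp$ and a postcomposable $h\colon\target(f)\to Z$, I must check $hf\in S_\ppp$, i.e.\ that $y'(hf)\ppp y'$ for every $y'\colon Z\to T$. The trick is to set $y:=y'h$; then $\source(y)=\target(f)$, so the membership $f\in S_\ppp$ gives $y'(hf)=yf\ppp y'h$, and since $y'h\preceq y'$ (witnessed by $h$) the third cosieve axiom $y\preceq z\Rightarrow x\ppp z$ upgrades this to $y'(hf)\ppp y'$. For $\bar S_\ppp$ closure is immediate from associativity, $k(hf)=(kh)f$, since the defining condition only constrains the innermost factor; thus $\bar S_\ppp$ is exactly the cosieve generated by $P:=\{f\mid\exists y:yf\ppp y\}$, and $S_\ppp\subseteq P\subseteq\bar S_\ppp$ via the trivial factorisation $f=\id f$.

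For the converse I would verify the three axioms for $\ppp_S$ directly: $x\ppp_S y$ unwinds to $x=yf$ with $f\in S$, which immediately gives $x\preceq y$; equivariance follows from $gx=(gy)f$ with the same $f\in S$ and $g\in\Aut$; and transitivity under $y\preceq z$ uses that $S$ is closed under postcomposition, rewriting $x=yf=z(kf)$ where $y=zk$ and $kf\in S$. The identity $S_{\ppp_S}=\bar S_{\ppp_S}=S$ is where monicity is essential: for any composable $y$, cancelling $y$ in $yf=yg$ shows $yf\ppp_S y\Leftrightarrow f\in S$, after which both $S_{\ppp_S}$ and $\bar S_{\ppp_S}$ collapse to $S$ (using cosieve closure for $\bar S_{\ppp_S}\subseteq S$ and $h=\id$ for the reverse). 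Finally, for the sandwich $x\ppp_{S_\ppp}y\Rightarrow x\ppp y\Rightarrow x\ppp_{\bar S_\ppp}y$, the first implication is the defining property of $S_\ppp$ applied to the witness $y$ itself, and the second produces the (unique, by monicity) $f$ with $x=yf$, notes $yf=x\ppp y$, hence $f\in P\subseteq\bar S_\ppp$.

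The one genuine subtlety I expect to need a convention or a standing assumption rather than an argument is the vacuous quantifier: if some object admits no 1-cell to $T$, then every $f$ with that object as codomain lies in both $S_\ppp$ and $S_{\ppp_S}$ vacuously, perturbing the equalities $S_\ppp\subseteq\bar S_\ppp$ and $S_{\ppp_S}=S$ on such degenerate morphisms. I would therefore restrict attention to objects admitting a map to $T$ (which is natural in the slice setting) or record this as a hypothesis; everything else is bookkeeping. Conceptually, the reason two different cosieves $S_\ppp\subseteq\bar S_\ppp$ appear, rather than a single inverse to $S\mapsto\ppp_S$, is that $\ppp\mapsto S_\ppp$ and $\ppp\mapsto\bar S_\ppp$ are only the two one-sided approximations of a would-be inverse, and the non-invertible sandwich is precisely the statement that a general $\Aut$-cosieve $\ppp$ is pinched between the two cosieve-induced relations without necessarily coinciding with either.
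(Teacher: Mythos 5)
Your proof is correct and follows essentially the same route as the paper's: the same substitution $y:=y'h$ plus the cosieve axiom for $S_\ppp$, the observation that $\bar S_\ppp$ is the cosieve generated by $\{f\mid \exists y: yf\ppp y\}$, the same verification for $\ppp_S$, monic cancellation for $S_{\ppp_S}=\bar S_{\ppp_S}=S$, and the same witnesses for the final sandwich. Your remark about morphisms whose codomain admits no 1-cell to $T$ is a legitimate caveat that the paper passes over silently (though it is consistent with the paper's later convention of discarding such objects), and does not affect the substance of the argument.
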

\begin{proof}
Let $\ppp$ be an $\mathrm{Aut}(T)$-cosieve. Given 
$ f \colon X \to Y$ in $S_\ppp$ 
and  
$h \colon Y \to
Z$, we need to show 
$hf \in S_\ppp$, that is, that for all
$z \colon Z \to T$
we have $zhf \ppp z$. To see this,
set 
$y:=zh,x:=yf=zhf$. Then 
on the one hand, we have 
$x = yf \ppp y$ 
by the definition of $S_\ppp$, and 
on the other hand
$y=zh \preceq z$ by the definition
of $\preceq$. Since $\ppp$ is a 
cosieve, this implies  
$x \ppp z$ as required. That $\bar
S_\ppp$ is a cosieve is immediate
(it is the cosieve generated by
all $f$ with $yf \ppp y$ for
some $y$).  

Conversely, if $S$ is a cosieve and 
$ x \ppp_S y$, 
then evidently 
$x \preceq y$ and 
$ gx \ppp_S 
gy$ (as $gx=gyf 
\Leftrightarrow x=yf$ for all
$g \in\mathrm{Aut}(T)$). Also
if
$y \preceq z$ with  
$y = zh$ for some $h \colon Y \to
Z$, then $x = yf=zhf$ and as 
$hf \in S$ ($S$ is a
cosieve), $x \ppp_S z$. So $\ppp_S$ is
an $\mathrm{Aut}(T)$-cosieve.  

We have
$$
    S_{\ppp_S}
    =
    \{ f \colon 
    X \to Y \mid 
    \forall y \colon 
    Y \to T \exists 
    \tilde f \in S : yf = y\tilde
f\}
$$
and as all 1-cells are monic, we
evidently have 
$S_{\ppp_S}=S$. Similarly,
$\bar S_{\ppp_S}=S$. 
Conversely, 
$x \ppp_{S_\ppp} y \Rightarrow x
\ppp y$ follows immediately from  
\begin{equation*}
    x \ppp_{S_{\ppp}} y 
    \Leftrightarrow 
    \exists f : (x = yf \mbox{ and }
    \forall \tilde y \colon 
    Y \to T : \tilde y f \ppp 
    \tilde y).
\end{equation*}
The implication 
$x \ppp y \Rightarrow 
x \ppp_{\bar S_\ppp}$ follows
analogously from 
\begin{equation*}
    x \ppp_{\bar S_\ppp} y 
    \Leftrightarrow 
    \exists f : (x = yf \mbox{ and }
    \exists \tilde y,h,g : \tilde y g \ppp 
    \tilde y,f = hg),
\end{equation*}
just take $g=f,\tilde
y=y,h=\mathrm{id} _{\source(y)}$.
\end{proof}

Here is an example of an $A$-cosieve 
that is not of the
form $\ppp_S$:

\begin{example}
Let $\C$ be the (2,1)-category 
of all sets with injective but not
surjective maps plus identities as 
1-cells and all 2-cells being 
identities. If $T$ is any set,
1-cells in $\C/T$ are given by
the relation
$x \preceq y \Leftrightarrow 
\mathrm{im}\, x \subseteq
\mathrm{im}\, y$; the 2-cells are
all identities. The group
$\mathrm{Aut}(T)$ is
trivial, 
all 1-cells in $\C$ are monic, 
and any $t \in T$ 
defines a cosieve 
$$
    x \ppp y : \Leftrightarrow 
    x \preceq y \mbox{ and } 
    t \in \mathrm{im}\, y \setminus 
    \mathrm{im}\, x,
$$
but
$S_\ppp$ (and hence
$\ppp_{S_\ppp}$) is empty, while 
$\bar S_\ppp$ consists of all
1-cells that are not identities.
\end{example}

Note, however, that 
in our main example,
$\ppp$ is of the form $\ppp_S$:

\begin{example}\label{abijan}
If $\C=\mfld^d$, then the set of
all embeddings $X \to Y$ of a
manifold $X$ into the interior of
$Y$
is a cosieve, hence $\ppp$ from
Example~\ref{polityka2} is a
$\mathrm{Diff}(T)$-cosieve 
given by a cosieve in
$\C$. 
\end{example}

\subsection{Orbit 2-categories}
We now show that the choice of an
$A$-cosieve $\pp$ and of 
an $A$-self-duality $\degree$ 
upgrades $\O_s$ to a (2,1)-category
which is \emph{2-thin} (contains at
most one 2-cell between any two
1-cells):

\begin{proposition}\label{modest}
Let $s \colon (S,\sqsubseteq) \to 
(U_G,\supseteq)$ be a $\G$-presheaf, 
$\degree$ be an $A$-self-duality,
and $\pp$ be an
$A$-cosieve. Then we have:
\begin{enumerate}
\item 
The relation
$$
    \gamma G_x \equiv \varepsilon
    G_x : \Leftrightarrow  
    (\forall u \in S \colon 
    u \pp y^\degree \Rightarrow 
    G_u \cap 
    \varepsilon G_x \gamma ^{-1} 
    \neq \emptyset)
$$
is an equivalence relation on
the morphism set $\O_s(x,y)$.
\item Interpreting $\equiv$ as a 2-cell
turns $\O_s$ into a (2,1)-category.
\item The $\G$-action on $\O_s$
induces an $\G$-action on
$\ho(\O_s)$.
\end{enumerate}
\end{proposition}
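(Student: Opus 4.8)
The plan is to treat the three parts in order, with part~(2) as the real substance. Before anything else I would record that the relation $\equiv$ is well defined on cosets: replacing $\gamma$ by $\gamma\alpha$ or $\varepsilon$ by $\varepsilon\beta$ with $\alpha,\beta\in G_x$ leaves the subset $\varepsilon G_x\gamma^{-1}$ unchanged, since $G_x\alpha^{-1}=G_x$ and $\beta G_x=G_x$. For part~(1) I would then use only that each $G_u$ is a subgroup. Reflexivity holds because $1\in G_u$ and $1\in\gamma G_x\gamma^{-1}=G_{\gamma\gact x}$ by the Peiffer consequence~(\ref{peiffer2}), so the intersection contains $1$. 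For symmetry, if $g\in G_u\cap\varepsilon G_x\gamma^{-1}$ then $g^{-1}\in G_u\cap\gamma G_x\varepsilon^{-1}$. For transitivity, given $g_1\in G_u\cap\varepsilon G_x\gamma^{-1}$ and $g_2\in G_u\cap\delta G_x\varepsilon^{-1}$, the product $g_2g_1=\delta(\alpha_2\alpha_1)\gamma^{-1}$ lands in $G_u\cap\delta G_x\gamma^{-1}$; the cancellation $\varepsilon^{-1}\varepsilon$ and the idempotency $G_xG_x=G_x$ are exactly what make this close up.

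For part~(2), the key point is that $\equiv$ is a congruence for $\horiz$, i.e.\ that it is compatible with whiskering on both sides; everything else (vertical composition, identity and inverse $2$-cells, the interchange law and the associativity axioms) is then automatic because the resulting structure is $2$-thin, and invertibility of $2$-cells follows from the symmetry of $\equiv$. Right whiskering is the easy direction: if $\delta G_y\equiv\delta' G_y$ in $\O_s(y,z)$ and $\gamma G_x\in\O_s(x,y)$, then for $u\pp z^\degree$ a witness $g=\delta'\beta\delta^{-1}\in G_u$ with $\beta\in G_y$ already lies in $\delta'\gamma G_x\gamma^{-1}\delta^{-1}$, because $\gamma\gact x\sqsubseteq y$ forces $G_y\subseteq G_{\gamma\gact x}=\gamma G_x\gamma^{-1}$ by the presheaf property~(\ref{aproug}). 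Left whiskering is the crux and is where all three structures enter. Given $\gamma G_x\equiv\gamma' G_x$ in $\O_s(x,y)$ and $\delta G_y\colon y\to z$, I must control $u\pp z^\degree$, but the hypothesis only speaks about $v\pp y^\degree$. To bridge this I would use the self-duality: from $\delta\gact y\sqsubseteq z$ and~(\ref{scholz}) one gets $z^\degree\sqsubseteq(\delta\gact y)^\degree\sim\delta\gact y^\degree$, so the cosieve property promotes $u\pp z^\degree$ to $u\pp\delta\gact y^\degree$; then $A$-equivariance of the cosieve lets me set $v:=\delta^{-1}\gact u\pp y^\degree$. Applying the hypothesis at $v$ and rewriting $G_v=\delta^{-1}G_u\delta$ via~(\ref{peiffer2}) produces, after conjugation by $\delta$, the required witness in $G_u\cap\delta\gamma' G_x\gamma^{-1}\delta^{-1}$.

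Part~(3) reduces to checking that each functor $F_h=(h\cdot-)$ of the $\G$-action preserves $\equiv$, so that it descends to $\ho(\O_s)$; the natural transformations attached to $\gamma\in G$ then descend automatically, their components being the morphisms $\gamma G_x$, which have well-defined homotopy classes, and the naturality squares survive the quotient functor $\O_s\to\ho(\O_s)$. The compatibility of $F_h$ with $\equiv$ is a purely equivariant computation: using $G_{hx}=\ad(h)(G_x)=hG_xh^{-1}$ one finds $(h\gamma'h^{-1})G_{hx}(h\gamma h^{-1})^{-1}=h(\gamma' G_x\gamma^{-1})h^{-1}$, while $(hy)^\degree\sim h(y^\degree)$ together with Lemma~\ref{lucy} and $A$-equivariance of the cosieve matches the test objects $u'\pp(hy)^\degree$ with $u:=h^{-1}u'\pp y^\degree$; conjugating a witness by $h$ then finishes it.

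The step I expect to be the main obstacle is the left-whiskering compatibility in part~(2): it is the only place that simultaneously requires the self-duality (to trade $z^\degree$ for $y^\degree$), the $A$-equivariance of the cosieve (to pull $u$ back along $\delta^{-1}$), and the Peiffer identity (to move between $G_u$ and $G_{\delta^{-1}\gact u}$), and getting the direction of all three right is the delicate part. The remaining verifications are either formal consequences of $2$-thinness or short manipulations inside subgroups of $G$.
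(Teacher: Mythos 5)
Your proposal is correct and follows essentially the same route as the paper: the same coset manipulations for part~(1), the same two witnesses for horizontal compatibility in part~(2) (one from $G_y\subseteq\varepsilon G_x\varepsilon^{-1}$, one from pulling the test object $u\pp z^\degree$ back to $\delta^{-1}\gact u\pp y^\degree$ via the self-duality, the cosieve's $A$-equivariance, and the Peiffer consequence), and the same conjugation-by-$h$ argument for part~(3). The only cosmetic difference is that you split horizontal composition into left and right whiskering and recombine via transitivity and $2$-thinness, whereas the paper verifies the composite $\delta\gamma G_x\equiv\lambda\varepsilon G_x$ in a single computation.
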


\begin{proof}
For all $u \in S$, we have
$1 \in G_u \cap \gamma G_x \gamma
^{-1}$, hence
$$ 
    \gamma G_x \equiv \gamma G_x.
$$
Next, if 
$ \alpha \in G_u \cap \varepsilon
G_x \gamma ^{-1}$, then 
$ \alpha ^{-1} \in G_u \cap 
\gamma G_x \varepsilon ^{-1}$,
so
$$ 
    \gamma G_x \equiv
\varepsilon G_x 
\Rightarrow \varepsilon G_x
\equiv \gamma G_x.
$$ 

Finally, if 
$ \alpha \in G_u \cap 
\varepsilon G_x \gamma ^{-1}$ 
and $ \beta \in G_u \cap 
\rho G_x \varepsilon ^{-1}$, then 
we obtain 
$ \beta \alpha \in G_u \cap 
\rho G_x \gamma ^{-1}$, so
$$ 
    \gamma G_x \equiv 
\varepsilon G_x,  
\varepsilon G_x \equiv   
\rho G_x \Rightarrow 
\gamma G_x \equiv \rho G_x.
$$ 
So $\equiv$ is an equivalence
relation and if we interpret it as
a 2-cell, there is
a (necessarily
unique and associative) vertical
composition of 2-cells (which in a
sense is induced by the product in
$G$), there is an identity
2-cell for each 1-cell $ \gamma
G_x$ (represented by 1), and 
all 2-cells are
invertible.  

Up to here no properties of
$\pp$ have been used. However,  
they are required to establish 
a horizontal
composition of 2-cells
$ \gamma G_x \equiv
\varepsilon G_x$ and 
$ \delta G_y \equiv \lambda G_y$
for two other 1-cells 
$ \delta G_y,\lambda G_y \colon 
y \to z$. 

We have to show 
$ \delta \gamma G_x \equiv 
\lambda \varepsilon G_x$.
To do so, assume $u \in S$ with 
$u \pp z^\degree$. Then as 
$ \delta G_y \equiv \lambda G_y$,
there exists an element 
$$
    \mu \in G_u \cap  
    \lambda G_y \delta ^{-1}.
$$
Since $ \varepsilon G_x $ is a
1-cell $x \to y$, we have 
$\varepsilon \gact x \sqsubseteq y 
\Rightarrow G_y \subseteq \varepsilon G_x
\varepsilon ^{-1}$, so we also have
\begin{equation}\label{madina}
    \mu \in G_u \cap  
    \lambda \varepsilon 
    G_x \varepsilon ^{-1}\delta ^{-1}.
\end{equation}
Furthermore, as $ 
\delta G_y $ is a 1-cell 
$ y\to z$, we have 
$ \delta \gact y \sqsubseteq z
\Rightarrow z^\degree \sqsubseteq 
\delta \gact y^\degree$. Thus 
$ u \pp z^\degree$ implies 
$u \pp \delta \gact y^\degree 
 \Rightarrow 
\delta ^{-1} \gact u
\pp y^\degree
$ and 
as $ \gamma G_x \equiv \varepsilon
G_x$, there exists some 
$$
    \alpha \in G_{\delta ^{-1}
\gact u} \cap \varepsilon G_x
\gamma ^{-1},  
$$ 
which means 
$$
    \delta \alpha \delta ^{-1} 
    \in G_u \cap 
    \delta \varepsilon G_x 
    \gamma ^{-1} \delta ^{-1}. 
$$
In combination with (\ref{madina})
we conclude 
$$
    \mu \delta \alpha \delta ^{-1} 
    \in G_u \cap \lambda \varepsilon G_x 
    \gamma ^{-1} \delta ^{-1},
$$
so this set is not empty as we had
to show in order to establish the
horizontal composition 
$ \delta \gamma G_x \To \lambda
\varepsilon G_x$
of 
$ \gamma G_x \To \varepsilon G_x$
with 
$ \delta G_y \To \lambda G_y$.

Due to the uniqueness of 2-cells,
the horizontal composition is
automatically
associative and satisfies the
exchange law. 

Last but not least, the action of $A$ on 
$\O_s$ defined in
Proposition~\ref{o2cat} descends to 
$\ho(\O_s)$, since for all $h \in
A$, we have
$$ 
    \gamma G_x \equiv \varepsilon
    G_x
    \Leftrightarrow 
 (h \gamma h^{-1} G_{hx} \equiv 
(h \varepsilon h^{-1}) G_{hx},
$$ 
simply conjugate all of 
$ G_u \cap \varepsilon G_x
\gamma^{-1} \neq \emptyset$ by
$h$. Similarly, the natural
transformations $ \gamma _x = 
\gamma G_x$ descend to $\ho(\O_s)$. 
\end{proof}

\begin{example}\label{herreger}
We keep extending
Examples~\ref{polityka} and
\ref{polityka2} with 
$\C=\mfld^d$, $T$ a compact
$d$-dimensional manifold without
boundary. 
Two 1-cells $ \gamma G_x , \varepsilon G_x 
\colon  x \to y$ in $\O_{s_{\C/T}}$ are represented by 
isotopy classes of isotopies 
$ \gamma \colon \mathrm{id} _T \To 
g, \varepsilon \colon \mathrm{id}
_T \To e$ with
$ \mathrm{im}\, gx \subseteq 
\mathrm{im}\, y, \mathrm{im}\, ex
\subseteq \mathrm{im}\, y$. To
distinguish this from the generic
case, we write here $\adw$ instead
of $\gact$; recall
that $ \gamma \adw x = gx,
\varepsilon \adw x = ex$, 
$ y \preceq z \Leftrightarrow 
\mathrm{im}\, y \subseteq
\mathrm{im}\, z$. The group $G_x$
contains the isotopies whose
restriction $ \gamma \WhiskR x$ to 
$ \mathrm{im}\, x$ is
(isotopic to) the constant isotopy 
with value $ \mathrm{id}
_{\mathrm{im}\, x}$. So we can
identify $ \gamma G_x$ via the
assignment
\begin{equation}\label{functoron1cells}
    \gamma G_x \mapsto \gamma
\WhiskR x \colon x \To gx
\end{equation}
with the restriction of 
$ \gamma \colon \mathrm{id} _T \To
g$ to $ \mathrm{im}\, x$. The 
coset $ \gamma G_x$ defines a 
1-cell $ \gamma G_x \colon x \to
y$ whenever $\mathrm{im}\, gx
\subseteq \mathrm{im}\, y$.
Now 
$ \gamma G_x \equiv \varepsilon
G_x$ means that for all
submanifolds 
$ \mathrm{im}\, u \subseteq T$ that
are disjoint from $ \mathrm{im}\,
y$ (i.e.~$u \ppp y^\degree$) there exists
a 2-cell $ \alpha \colon
\mathrm{id} _T \To a$ for some
diffeomorphism 
$a \colon T \to T$ which is on the
one hand in $ G_u$, that is, the
restriction of $ \alpha $ to 
$ \mathrm{im}\, u$ is constantly
equal to the identity,
$$
    \alpha (t,p) = p \quad \forall
    t \in [0,1],p \in \mathrm{im}\,
    u \subset T \setminus
\mathrm{im}\, y. 
$$ 
At the same time, $ \alpha 
\in \varepsilon G_x \gamma ^{-1} = 
\varepsilon \gamma ^{-1} G_{\gamma
\gact x}$ means that $ \alpha x$ is
a 2-cell that composes with 
$ \gamma x$ to $ \varepsilon x$.  
The upshot is that $ \gamma G_x
\equiv \varepsilon G_x$ means that
for any choice of submanifold 
$ \mathrm{im}\, u $ that is
disjoint from $ \mathrm{im}\, y$,
we find an isotopy that fixes 
$ \mathrm{im}\, u$ pointwise and
deforms the embedding 
$ gx \colon X \to T$ inside the
complement of $ \mathrm{im}\, u$ to 
$ ex \colon X \to T$.
\end{example}

\subsection{Lifting of
self-dualities}
Let $s \colon (S,\sqsubseteq) 
\rightarrow (U_G,\supseteq)$ be a 
$\G$-presheaf and 
$(S,\sqsubseteq)$ be
self-dual. 
If $ \gamma G_x \colon x \to y$ is
a morphism in $\O_s$, then 
$ \gamma \gact x \sqsubseteq y$. 
As $\degree$ is a self-duality, 
$ y^\degree \sqsubseteq 
(\gamma \gact x)^\degree = 
\gamma \gact x^\degree$, hence
$ \gamma ^{-1} \gact y^\degree
\sqsubseteq x^\degree$ and there is
a morphism $ \gamma ^{-1}
G_{y^\degree} \colon y^\degree \to
x^\degree$. However, in general
this does not lead to a
self-duality of 
$\O_s$ itself: $ \gamma G_x \mapsto
\gamma ^{-1} G_{y^\degree}$ is not
well-defined unless $ G_x \subseteq 
G_{y^\degree}$. However, on
$\ho(\O_s)$ we obtain a somewhat
satisfactory resolution.

One easily verifies
that $ \gamma G_x= \varepsilon
G_x$ implies $ \gamma ^{-1}
G_{y^\degree} \equiv \varepsilon
^{-1} G_{y^\degree}$, but in general,
$ \gamma G_x \equiv \varepsilon
G_x$ does not necessarily imply
$ \gamma^{-1} G_{y^\degree} \equiv 
\varepsilon^{-1} G_{y^\degree}$. 
We now formulate a technical
condition that ensures it is; we
will explain in
Example~\ref{tubular} that this is an abstract
replacement of the
existence of a tubular neighbourhood
of a submanifold.

\begin{proposition}\label{lome}
Assume that $\degree$ is an 
$A$-self-duality, $\pp$ is
an $A$-cosieve, 
and that for all
$b,c,d \in S$ we have
\begin{equation}
\begin{aligned}\label{tubucondi}
    &
(c \pp b^\degree
    \mbox{ and } d \pp b^\degree)
    \\
\Rightarrow \quad &
    \exists \rho \in G_c \cap G_d,  
    a \in S :  
    (\rho \gact a \sim b
    \mbox{ and }
    a \pp b). 
\end{aligned}
\end{equation}
If 
$ \gamma G_x \equiv \varepsilon G_x
\colon x \to y$ in $\O_s$, then we have 
$\gamma ^{-1} G_{y^\degree}
\equiv \varepsilon ^{-1}
G_{y^\degree} \colon y^\degree 
\to x^\degree$. 
\end{proposition}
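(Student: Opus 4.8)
The plan is to unwind the definition of $\equiv$ on the dual side and reduce the claim to a single non-emptiness statement that the tubular condition~\eqref{tubucondi} is tailor-made to produce. Recall from the discussion preceding the proposition that $\gamma^{-1}G_{y^\degree}$ and $\varepsilon^{-1}G_{y^\degree}$ are indeed morphisms $y^\degree \to x^\degree$. By definition, $\gamma^{-1}G_{y^\degree}\equiv\varepsilon^{-1}G_{y^\degree}$ means that for every $v \pp (x^\degree)^\degree$ we have $G_v \cap \varepsilon^{-1}G_{y^\degree}\gamma \neq \emptyset$; since $x^{\degree\degree}\sim x$, Lemma~\ref{lucy} lets me replace $v\pp(x^\degree)^\degree$ by $v \pp x$. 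Multiplying on the left by $\varepsilon$ and on the right by $\gamma^{-1}$ gives a bijection $\sigma \mapsto \varepsilon\sigma\gamma^{-1}$ showing that this is equivalent to $G_{y^\degree}\cap\varepsilon G_v\gamma^{-1}\neq\emptyset$. So I fix $v \pp x$ and aim to produce an element of $G_{y^\degree}\cap\varepsilon G_v\gamma^{-1}$.

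The key step is to invoke~\eqref{tubucondi} with $b := y^\degree$, $c := \varepsilon\gact v$, and $d := \gamma\gact v$. To check its hypotheses, note that $\target(\varepsilon),\target(\gamma)\in A$ and $v\pp x$ give $\varepsilon\gact v \pp \varepsilon\gact x$ and $\gamma\gact v \pp \gamma\gact x$ by $A$-equivariance of the cosieve; combined with $\varepsilon\gact x\sqsubseteq y$ and $\gamma\gact x\sqsubseteq y$ (these hold because $\varepsilon G_x,\gamma G_x$ are morphisms $x\to y$) and the cosieve property, this yields $\varepsilon\gact v\pp y$ and $\gamma\gact v\pp y$, i.e.\ $c,d\pp (y^\degree)^\degree$ (using $y\sim y^{\degree\degree}$ and Lemma~\ref{lucy}). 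The condition then supplies $\rho \in G_{\varepsilon\gact v}\cap G_{\gamma\gact v}$ and $a\in S$ with $a\pp y^\degree$ and $\rho\gact a\sim y^\degree$.

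With this in hand I apply the hypothesis $\gamma G_x\equiv\varepsilon G_x$ to the test object $u:=a\pp y^\degree$, obtaining $\alpha\in G_a\cap\varepsilon G_x\gamma^{-1}$, and set $h:=\rho\alpha\rho^{-1}$. Since $\rho\gact a\sim y^\degree$ forces $G_{\rho\gact a}=G_{y^\degree}$, the identity~\eqref{peiffer2} gives $h\in\rho G_a\rho^{-1}=G_{\rho\gact a}=G_{y^\degree}$. It remains to see $h\in\varepsilon G_v\gamma^{-1}$, for which I factor $\varepsilon^{-1}h\gamma=(\varepsilon^{-1}\rho\varepsilon)(\varepsilon^{-1}\alpha\gamma)(\gamma^{-1}\rho^{-1}\gamma)$ and check each factor lies in the subgroup $G_v$: rewriting $G_{\varepsilon\gact v}=\varepsilon G_v\varepsilon^{-1}$ and $G_{\gamma\gact v}=\gamma G_v\gamma^{-1}$ via~\eqref{peiffer2} puts the outer two factors in $G_v$, while $\varepsilon^{-1}\alpha\gamma\in G_x\subseteq G_v$ because $v\pp x$ implies $v\sqsubseteq x$ and hence $G_x\subseteq G_v$. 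Thus $h\in G_{y^\degree}\cap\varepsilon G_v\gamma^{-1}$, which is exactly what was needed.

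The main obstacle, and the reason the tubular condition is needed at all, is a size mismatch: the hypothesis only ever produces witnesses in groups $G_u$ with $u\pp y^\degree$, all of which \emph{contain} $G_{y^\degree}$, whereas the goal demands a witness in the smaller group $G_{y^\degree}$ itself. The condition~\eqref{tubucondi} resolves precisely this by furnishing an interior approximation $a\pp y^\degree$ together with a pushing element $\rho$ conjugating $G_a$ onto $G_{y^\degree}$; the nonobvious choice $c=\varepsilon\gact v$, $d=\gamma\gact v$ is what guarantees that this same $\rho$ is simultaneously compatible with both deformations $\gamma$ and $\varepsilon$ modulo $G_v$, so that the conjugated witness still records the required relation at $v$.
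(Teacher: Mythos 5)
Your proof is correct and follows essentially the same route as the paper's: the same reformulation of $\gamma^{-1}G_{y^\degree}\equiv\varepsilon^{-1}G_{y^\degree}$, the same application of \eqref{tubucondi} with $b=y^\degree$ and $c,d$ the translates of $v$, the same test object $u=a$, and the same conjugation by $\rho$ (your element-wise factorization of $\varepsilon^{-1}h\gamma$ into three members of $G_v$ is just a repackaging of the paper's set identity $\rho^{-1}\varepsilon G_v\gamma^{-1}\rho=\varepsilon G_v\gamma^{-1}$).
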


\begin{proof}
We need to show 
\begin{align*}
    \gamma^{-1} G_{y^\degree} \equiv 
    \varepsilon^{-1} G_{y^\degree} 
& \Leftrightarrow     
    \forall v \pp x : 
    G_v \cap \varepsilon^{-1} G_{y^\degree} 
    \gamma  
    \neq \emptyset \\
& \Leftrightarrow     
    \forall v \pp x : 
    \varepsilon G_v \gamma ^{-1} \cap 
    G_{y^\degree} 
    \neq \emptyset.
\end{align*}
We are going to apply (\ref{tubucondi}) with 
$$
    b = y^\degree ,\quad
    c = \gamma \gact v,\quad
    d = \varepsilon \gact v, 
$$
so we need to show 
$ 
(\gamma \gact v)
\pp 
y^{\degree\degree} 
$ 
and 
$ 
(\varepsilon \gact v)
\pp
y^{\degree\degree}$. 
To do so,  
recall once more that
$ \gamma G_x, \varepsilon G_x
\colon x \to y$ are 1-cells, so
$
(\gamma \gact x) 
\sqsubseteq y
,
(\varepsilon \gact x)
\sqsubseteq y
$. 
Together with 
$ v \pp x \Rightarrow 
\gamma \gact v \pp 
\gamma \gact x$ and 
$ v \pp x \Rightarrow 
\varepsilon \gact v \pp 
\varepsilon \gact x$
we conclude $\gamma
\gact v \pp y, \varepsilon
\gact v \pp y$, and now we can use 
$y^{\degree\degree} \sim y$ and 
Lemma~\ref{lucy}.  

So by (\ref{tubucondi}), there are 
$
    \rho \in G
$,
$a \in S$ satisfying 
$$
    \rho \in G_{\gamma \gact v}
    \cap G_{\varepsilon \gact
v},\quad
    \rho \gact y^\degree
    \sim a,\quad
    a \pp y^\degree.
$$
Now we use 
$ \gamma G_x \equiv \varepsilon
G_x$ with $u:=a$. This shows that 
$$
    G_a \cap \varepsilon G_x \gamma
^{-1} = 
    G_{\rho \gact y^\degree} \cap 
    \varepsilon G_x \gamma 
    \neq \emptyset
$$
which implies (conjugate with $
\rho ^{-1}$ and use 
$v \pp x \Rightarrow v
\sqsubseteq x \Rightarrow G_x \subseteq
G_v$) 
$$
    G_{y^\degree} \cap 
    \rho ^{-1} \varepsilon G_v \gamma
    ^{-1} \rho \neq \emptyset.
$$
As $ \rho $ and hence $ \rho ^{-1}$
is both in $ G_{\gamma \gact v}$
and 
$ G_{\varepsilon \gact v}$ we
finally have
\begin{align*}
    \rho ^{-1} \varepsilon G_v \gamma
    ^{-1} \rho 
&=
    \rho ^{-1} G_{\varepsilon \gact
x} 
    \varepsilon \gamma^{-1} \rho \\
&=
    G_{\varepsilon \gact v} 
    \varepsilon \gamma^{-1} \rho \\
&=
    \varepsilon \gamma ^{-1} 
    G_{\gamma \gact v} \rho \\
&=
    \varepsilon \gamma ^{-1} 
    G_{\gamma \gact v} \\
&=
    \varepsilon 
    G_v \gamma^{-1}.\qedhere
\end{align*} 
\end{proof}

\begin{corollary}\label{modest2}
Under the assumptions of
Proposition~\ref{lome}, $\ho(\O_s)$
is a self-dual category, with 
the dual of $ [\gamma G_x] \colon
x \to y$ given by
$$
    [\gamma G_x]^\degree :=
    [\gamma ^{-1} G_{y^\degree}]
\colon  y^\degree \to x^\degree. 
$$
\end{corollary}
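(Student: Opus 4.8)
The plan is to show that the formula $[\gamma G_x]^\degree := [\gamma^{-1}G_{y^\degree}]$ defines a contravariant endofunctor of $\ho(\O_s)$ whose square is naturally isomorphic to the identity. I would begin by recalling, as noted just before Proposition~\ref{lome}, that the assignment $\gamma G_x \mapsto \gamma^{-1}G_{y^\degree}$ already sends a morphism $x \to y$ of $\O_s$ to a morphism $y^\degree \to x^\degree$: from $\gamma \gact x \sqsubseteq y$ one gets $y^\degree \sqsubseteq (\gamma \gact x)^\degree = \gamma \gact x^\degree$, hence $\gamma^{-1} \gact y^\degree \sqsubseteq x^\degree$. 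The subtlety is only that both the representative $\gamma$ and the class $[\gamma G_x]$ feed into the output. Since equality of cosets is a special case of the relation $\equiv$, Proposition~\ref{lome} settles both issues at once: it gives $\gamma G_x \equiv \varepsilon G_x \Rightarrow \gamma^{-1}G_{y^\degree} \equiv \varepsilon^{-1}G_{y^\degree}$, so $[\gamma^{-1}G_{y^\degree}]$ depends neither on the representative nor on the equivalence class, and we obtain a well-defined map $\ho(\O_s)_1(x,y) \to \ho(\O_s)_1(y^\degree,x^\degree)$.

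Next I would verify contravariant functoriality at the level of representatives, where everything reduces to the group law in $G$. The identity of $x$ is $[G_x]$ (represented by $1$), and its image $[G_{x^\degree}]$ is exactly the identity of $x^\degree$. For composition, given $\gamma G_x \colon x \to y$ and $\delta G_y \colon y \to z$, one has $(\delta G_y)\horiz(\gamma G_x)=\delta\gamma G_x$, whose dual is $[(\delta\gamma)^{-1}G_{z^\degree}]=[\gamma^{-1}\delta^{-1}G_{z^\degree}]$; on the other hand the duals $[\gamma^{-1}G_{y^\degree}]\colon y^\degree\to x^\degree$ and $[\delta^{-1}G_{z^\degree}]\colon z^\degree\to y^\degree$ compose to $(\gamma^{-1}G_{y^\degree})\horiz(\delta^{-1}G_{z^\degree})=\gamma^{-1}\delta^{-1}G_{z^\degree}$, which agrees. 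As composition and identities in $\ho(\O_s)$ are induced from those in $\O_s$, these identities descend and $\degree$ is a contravariant endofunctor of $\ho(\O_s)$.

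To see that $\degree$ is a self-duality I would compute the double dual. Applying the formula twice sends $[\gamma G_x]\colon x\to y$ first to $[\gamma^{-1}G_{y^\degree}]\colon y^\degree\to x^\degree$ and then to $[\gamma G_{x^{\degree\degree}}]\colon x^{\degree\degree}\to y^{\degree\degree}$. The object-level discrepancy is controlled by $x\sim x^{\degree\degree}$ from \eqref{scholz}: combined with the presheaf condition \eqref{aproug} this forces $G_x = G_{x^{\degree\degree}}$, so $[G_x]\colon x\to x^{\degree\degree}$ and $[G_{x^{\degree\degree}}]=[G_x]\colon x^{\degree\degree}\to x$ are mutually inverse isomorphisms in $\O_s$. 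A short coset computation then shows that for every $[\gamma G_x]\colon x\to y$ the naturality square with these canonical isomorphisms as horizontal edges and with $[\gamma G_x]$, $[\gamma G_{x^{\degree\degree}}]$ as vertical edges commutes: both composites equal $[\gamma G_x]\colon x\to y^{\degree\degree}$. Hence the canonical isomorphisms $[G_x]$ assemble into a natural isomorphism from $\degree\circ\degree$ to the identity, and $\degree$ is a self-duality of $\ho(\O_s)$ with the stated action on morphisms.

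The substantive difficulty---that $\equiv$ is preserved under inversion, which is precisely where the tubular-neighbourhood hypothesis \eqref{tubucondi} enters---has already been dealt with in Proposition~\ref{lome}. What remains is therefore coset bookkeeping, and the one point demanding care is the object level: the double dual returns not to $x$ itself but to the isomorphic object $x^{\degree\degree}$, so $\degree$ squares to the identity only up to the natural isomorphism supplied by $G_x = G_{x^{\degree\degree}}$.
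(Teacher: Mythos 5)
Your proposal is correct and follows the same route the paper intends: the paper states Corollary~\ref{modest2} without a separate proof, treating it as immediate from Proposition~\ref{lome}, and your elaboration supplies exactly the missing routine steps (well-definedness on homotopy classes via Proposition~\ref{lome} applied to $=$ as a special case of $\equiv$, contravariant functoriality by coset multiplication, and the identification $G_x=G_{x^{\degree\degree}}$ from \eqref{scholz} and \eqref{aproug} yielding the natural isomorphism $\degree\circ\degree\cong\mathrm{id}$). Your care with the fact that the double dual lands on $x^{\degree\degree}$ rather than $x$ itself is a point the paper glosses over, and your resolution is sound.
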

\begin{example}\label{tubular}
For $s_{\mfld^d/T}$, 
(Example~\ref{herreger}), 
$b,c,d$ correspond to submanifolds 
$ B:=\mathrm{im}\, b, C:=\mathrm{im}\, c,
D:=\mathrm{im}\, d \subseteq T$ with 
$$ 
    B \cap C = B \cap D = \emptyset ,
$$
so $C,D$ are both contained
in the interior of $T\setminus B$
(as $ c \pp b^\degree, 
d \pp b^\degree$). 
Condition~(\ref{tubucondi}) asserts
the existence of an isotopy 
$ \rho \colon \mathrm{id} _T \To r$ 
for some diffeomorphism $r \colon T
\to T$ such that $ \rho c $ is
constantly $ \mathrm{id} _C$ and 
$ \rho d$ is constantly $
\mathrm{id} _D$ while $ \rho $
shrinks $B$ to a
manifold $A = \mathrm{im}\, a $
contained in the interior of $B$
which is however diffeomorphic to 
$B$ via $r$. To obtain such an
isotopy, choose a Riemannian metric
on $T$. Extend the outward normal
vectors of length 1 on $B \subset
T$ to a vector field on all of $T$
that is only supported on a small
neighbourhood of $\partial B$ 
disjoint from $C$ and $D$
(using e.g.~a partition of 1 and 
bump functions). Following the
inverse flow
of this vector field for times in a
sufficiently small closed time
interval yields $ \rho $ (or rather 
$ \rho ^*$).
\end{example}

\section{Application to slice
2-categories}\label{assumptionssec}
Our main theorem follows
more or less immediately from the results above, but
we discuss its assumptions and
our main example in more detail,
as well as the canonical choice of 
the relation $\ppp$.  
Throughout, we make
Assumptions~\ref{eins}
and~\ref{zwei}, and  
$A=\mathrm{Aut}(T),G,G_x,\adw,\preceq$ and $s_{\C/T}$ 
are as in 
Examples~\ref{elmena} and
\ref{herreger}.

\subsection{The interior of a
subobject}
The theory developed in
Section~\ref{4advent} crucially
relies on the
choice of an $\Aut$-cosieve with 
certain properties. This is an
auxiliary structure though, neither
the category $\ho(\C/T)$ nor the
resulting self-duality on it
depends on this choice.

As we will discuss in the next
subsection, the central assumption of
our theorem is a strong form of the
homotopy extension property
well-studied in algebraic and
differential topology. We  
now define an $\Aut$-cosieve 
$\ppp$ that is adapted to 
the formulation of this assumption:

\begin{definition}\label{defcos}
If $u \preceq v$, then we set
\begin{align*}
    u \ppp v  
    \> : \Leftrightarrow \>
    \forall \xi \colon x \To z 
    \,\exists \gamma \in G_u : 
    \target(x) = \source
    (v^\degree) \Rightarrow 
    \gamma v^\degree x  = 
    v^\degree \xi.
\end{align*}
\end{definition} 

As we will explain in
Example~\ref{hepgood} below, this is
consistent with the notation
introduced for the special case 
$\mfld^d/T$ in
Example~\ref{polityka2}
above. The interpretation derived
from this example is that a subobject $[u]$
of $T$ that is contained in a
subobject $[v]$ is in the
\emph{interior} of $[v]$ if and only
if any 2-cell $\xi$ which only
acts in the complement
$[v^\degree]$ of $[v]$ can be
extended to a 2-cell 
$ \gamma \colon \mathrm{id} _T 
\To g$ for which $g \in \Aut$ 
and $ \gamma $ is constantly the identity 
on $[u]$,  $ \gamma u = u$.

Let us verify that
Definition~\ref{defcos} 
indeed defines an $\Aut$-cosieve:

\begin{proposition}\label{pppiscos}
$\ppp$ is an $\Aut$-cosieve
in $(\C/T_0,\adw,\preceq)$. 
\end{proposition}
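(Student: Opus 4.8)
The plan is to verify the three defining clauses of an $\Aut$-cosieve in $(\C/T_0,\preceq)$ directly from Definition~\ref{defcos}: that $u \ppp v \Rightarrow u \preceq v$, that $u \ppp v \Rightarrow gu \ppp gv$ for every $g \in \Aut$, and that $u \ppp v$ together with $v \preceq w$ yields $u \ppp w$. The first clause is automatic, since Definition~\ref{defcos} only declares $u \ppp v$ under the standing hypothesis $u \preceq v$, so $\ppp$ is a subrelation of $\preceq$ by construction.

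For closure under postcomposition, I would start from $u \ppp v$ and $v \preceq w$, note $u \preceq w$ by transitivity, and invoke Assumption~\ref{zwei} to obtain $w^\degree \preceq v^\degree$, i.e.\ a $1$-cell $j$ with $w^\degree = v^\degree j$. Given any test $2$-cell $\xi \colon x \To z$ with $\target(x) = \source(w^\degree)$, the whiskered $2$-cell $j \WhiskL \xi \colon jx \To jz$ has $\target(jx) = \source(v^\degree)$ and so is admissible for $u \ppp v$; applying $u \ppp v$ produces $\gamma \in G_u$ with $\gamma v^\degree(jx) = v^\degree(j \WhiskL \xi)$. Using the interchange law together with $v^\degree j = w^\degree$, the two sides rewrite precisely as $\gamma w^\degree x$ and $w^\degree \xi$, which is exactly the witness demanded by $u \ppp w$. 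This step is essentially formal once the factorisation $w^\degree = v^\degree j$ is recorded.

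The equivariance clause is the main obstacle, since $(gv)^\degree$ need not equal $g v^\degree$ but is only isomorphic to it. By Assumption~\ref{zwei} we have $[(gv)^\degree] = [g(v^\degree)]$, so I would fix an invertible $1$-cell $k$ with $(gv)^\degree = g v^\degree k$; note $gu \preceq gv$ is immediate from $u = vf \Rightarrow gu = gvf$. Given a test $2$-cell $\xi \colon x \To z$ with $\target(x) = \source((gv)^\degree)$, I would transport it to $\xi' := k \WhiskL \xi \colon kx \To kz$ (whose codomain is $\source(v^\degree)$), apply $u \ppp v$ to obtain $\gamma \in G_u$ with $\gamma v^\degree(kx) = v^\degree \xi'$, and set $\gamma' := g \gamma g^{-1}$, which lies in $G_{gu} = \ad(g)(G_u) = g G_u g^{-1}$ by \eqref{aproug}. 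The crux is then the whiskering identity
\begin{equation*}
    (g \gamma g^{-1}) \WhiskR (g v^\degree kx)
    = g \WhiskL \bigl( \gamma \WhiskR (v^\degree kx) \bigr),
\end{equation*}
a consequence of associativity of horizontal composition and $g^{-1}g = \mathrm{id}_T$. Combined with the readings $(gv)^\degree x = g v^\degree kx$ and $(gv)^\degree \xi = g \WhiskL (v^\degree \xi')$, both $\gamma'(gv)^\degree x$ and $(gv)^\degree \xi$ collapse to $g \WhiskL (v^\degree \xi')$, establishing $gu \ppp gv$. I expect the only genuine care to lie in the bookkeeping of the isomorphism $k$ and of the whiskering/interchange identities; with these in place all three clauses follow.
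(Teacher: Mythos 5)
Your proof is correct and follows essentially the same route as the paper's: verify the three clauses directly, whisker the test $2$-cell along the comparison $1$-cell (your $j$, the paper's $l$) for closure under postcomposition, and conjugate the witness $\gamma$ by $g$ for equivariance. If anything, you are slightly more careful than the paper, which writes $(dv)^\degree = d\,v^\degree$ where Assumption~\ref{zwei} only provides an isomorphism of subobjects; your explicit bookkeeping with the invertible $1$-cell $k$ handles that point cleanly.
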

\begin{proof}
Assume $u \ppp v$. Then 
$u \preceq v$
holds by definition, say $u=vb$. If 
$ d \in \Aut$, then
we have  
$ du \preceq dv$ as $du=dvb$.
Also, the domain of $ (d v)^\degree = 
d v^\degree$ agrees with the domain
of $v^\degree$. Hence if $\xi
\colon x \To z$ is any 2-cell in
$\C$ between 1-cells $x,z$ whose 
codomain is $\target(x) = \target
((d v)^\degree = 
\target (v^\degree)$, then by
assumption, there exists $ \gamma
\in G$ with $ \gamma u = u $ and 
$ \gamma v^\degree x = v^\degree
\xi$. Then $ \eta := d \gamma d^{-1}$ 
satisfies 
$$ 
    \eta (du) = d \gamma d^{-1}
    du = d \gamma u = d u
$$
and 
$$
    \eta (dv)^\degree x = \eta (d v^\degree) x
=   d \gamma d^{-1} d v^\degree x =
    d \gamma v^\degree x = 
    d v^\degree \xi = 
    (dv)^\degree \xi.
$$
Thus $ du \ppp 
dv$. Similarly, if $v
\preceq w $, say $v = wc$, then 
we have $u = vb=wcb \preceq w$
and
$w^\degree \preceq v^\degree$, say 
$w^\degree =v^\degree l$. 
Furthermore, if $ \psi \colon 
m \to n$ is a 2-cell between
1-cells $m,n$ with codomain 
$$
    \target(m) = \source (w^\degree) =
\source (l),
$$ 
then $\xi := l \psi \colon x \To
z$,
$x :=lm,z := ln$, is a 2-cell and the target 
$\target(x) = \source
(v^\degree)$, so there exists 
$ \gamma \in G_u$ with 
$$
    \gamma w^\degree m 
    = \gamma v^\degree lm
    = \gamma v^\degree x = 
    v^\degree \xi = v^\degree l
\psi = w^\degree \psi. 
$$ 
So $u \ppp w$ as we had to show. 
\end{proof}

So far, this subsection has not
made any assumptions on 
$\C$ and $T$, but what we need to
demand is that the
converse of Definition~\ref{defcos}
holds, that is, that we can
characterise the image of 
the maps $ \C_2 \to \C_2,\xi
\mapsto e \xi$ in terms of 
$\ppp$:

\begin{assumption}\label{seven}
If $x,z \colon X \to E$, $e \colon
E \to T$, and 
$ \phi \colon ex \To ez$,
then 
$$  
  (\exists \xi \colon x \To z : 
    \phi = e \xi)
 \Leftrightarrow 
    (\forall u \ppp e^\degree 
    \exists \gamma \in G_u : 
    \gamma ex = 
    \phi). 
$$
\end{assumption}

To be clear: $ \Rightarrow $ holds
by definition of $\ppp$, what we
assume is $ \Leftarrow$.

Assumption~\ref{seven} enters the
proof of our theorem in the
following way:

\begin{proposition}\label{immerbesser}
There exists a 2-cell 
$\xi$ between 1-cells 
$ \gamma x, 
\varepsilon x$ in 
$\C/T$ with $x \in \C/T_0, 
\gamma , \varepsilon \in G$ 
if and only if 
$ \gamma G_x \equiv \varepsilon
G_x$. 
\end{proposition}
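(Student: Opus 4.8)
The plan is to read the statement entirely through Assumption~\ref{seven}, which is precisely the bridge between the existence of a 2-cell in $\C/T$ and a condition indexed by the cosieve $\ppp$; the rest is a translation of that condition into the group language of $\equiv$. First I would fix the common codomain $y$ through which both 1-cells factor, writing $gx = y f_\gamma$ and $ex = y f_\varepsilon$ for the embeddings $f_\gamma, f_\varepsilon$ (unique by Remark~\ref{cancel}), where $g = \target(\gamma)$, $e = \target(\varepsilon)$, and $\gamma x := \gamma \WhiskR x$, $\varepsilon x := \varepsilon \WhiskR x$. Setting $\phi := (\varepsilon \WhiskR x) \circ (\gamma \WhiskR x)^* \colon gx \To ex$, the 2-cell condition \eqref{eq:slice-2-cat:2-cell-cond} for a $\xi$ in $\C/T$ reads $\varepsilon \WhiskR x = (y \WhiskL \xi) \circ (\gamma \WhiskR x)$, i.e.\ $y \WhiskL \xi = \phi$. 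Hence a 2-cell $\gamma x \To \varepsilon x$ in $\C/T$ is exactly a 2-cell $\xi \colon f_\gamma \To f_\varepsilon$ in $\C$ with $\phi = y \WhiskL \xi$.

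Next I would apply Assumption~\ref{seven} with $e = y$ (so its ambient map is $E = Y$ and its objects $x,z$ are our $f_\gamma, f_\varepsilon$). This converts the existence of such a $\xi$ into the equivalent condition that for all $u \ppp y^\degree$ there is some $\delta \in G_u$ with $\delta \WhiskR gx = \phi$. The direction $\Rightarrow$ here is definitional, and the genuine input is the converse $\Leftarrow$ provided by the Assumption.

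The final step is a purely group-theoretic translation. Using the interchange identity $(\delta\gamma) \WhiskR x = (\delta \WhiskR gx) \circ (\gamma \WhiskR x)$ — the compatibility of right whiskering with the product of $G$ that underlies the composition in $\O_s$ — I would rewrite $\delta \WhiskR gx = \bigl((\delta\gamma) \WhiskR x\bigr) \circ (\gamma \WhiskR x)^*$. Comparing with $\phi$ and cancelling the invertible factor $(\gamma \WhiskR x)^*$ gives $\delta \WhiskR gx = \phi$ iff $(\delta\gamma) \WhiskR x = \varepsilon \WhiskR x$. Since $\eta \WhiskR x = \varepsilon \WhiskR x$ holds iff $(\varepsilon^{-1}\eta)\WhiskR x = \id_x$, i.e.\ iff $\varepsilon^{-1}\eta \in G_x$, this is equivalent to $\delta\gamma \in \varepsilon G_x$, that is $\delta \in \varepsilon G_x \gamma^{-1}$. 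Thus ``$\exists\, \delta \in G_u$ with $\delta \WhiskR gx = \phi$'' becomes ``$G_u \cap \varepsilon G_x \gamma^{-1} \neq \emptyset$'', and quantifying over all $u \ppp y^\degree$ yields exactly the relation $\gamma G_x \equiv \varepsilon G_x$ from Proposition~\ref{modest}.

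I expect the real difficulty to be bookkeeping rather than conceptual: one must keep the codomain $y$ (hence $y^\degree$) fixed throughout, so that the 2-cell in $\C/T$ and the instance of $\equiv$ refer to the same object, and one must apply the whiskering/interchange identities and the coset description of the fibres of $\eta \mapsto \eta \WhiskR x$ with care about composition order and inverses. Once the dictionary $\phi \leftrightarrow \varepsilon G_x \gamma^{-1}$ is set up correctly, the statement is a direct reading of Assumption~\ref{seven}.
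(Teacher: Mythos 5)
Your proof is correct and follows essentially the same route as the paper's: rewrite the ice cream cone condition as $y\xi = \varepsilon x \circ (\gamma x)^*$, feed this $\phi$ into Assumption~\ref{seven} with ambient 1-cell $y$, and translate the resulting condition into $G_u \cap \varepsilon G_x \gamma^{-1} \neq \emptyset$. In fact you spell out the final coset translation (via the interchange identity $(\delta\gamma)x = (\delta\, gx)\circ(\gamma x)$) that the paper leaves implicit in its closing sentence.
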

\begin{proof}
If $ \gamma \colon 
\mathrm{id} _T \To g$, then we have
\begin{align*}
\varepsilon x = y \xi \circ
\gamma x 
& \Leftrightarrow 
\varepsilon x \circ (\gamma x)^* =
y \xi \\
& \Leftrightarrow 
    \varepsilon x \circ \gamma ^*
    x= 
    y \xi \\
& \Leftrightarrow (\varepsilon
\circ \gamma ^*) x = y \xi \\
& \Leftrightarrow 
    (\varepsilon \circ g \gamma
^{-1} ) x = y \xi \\
& \Leftrightarrow \varepsilon  
    g \gamma ^{-1} x = y \xi \\
& \Leftrightarrow \varepsilon  
    \gamma ^{-1} gx = y \xi. 
\end{align*}
Now apply Assumption~\ref{seven} 
with $\phi = \varepsilon \gamma
^{-1} gx$.
\end{proof}

\subsection{The homotopy extension property}
A priori, the $\Aut$-cosieve $\ppp$
could be empty. As we
will show now, the backbone of our
main theorem
is that the objects in $\C/T_0$ are
\emph{cofibrations}, which
in the language we have developed
means that all objects have a
nonempty interior: as
$ \mathrm{id} _T$ is terminal
in 
$(\C/T_0,\preceq)$, 
$\mathrm{id} _T^\degree$ is
initial, that is, we have
$ \mathrm{id} _T^\degree \preceq 
v$ for all $v \in \C/T_0$, so
the following implies 
$ \mathrm{id} _T^\degree \ppp
v$ for all $v$.

\begin{assumption}\label{fortytwo}
$ \mathrm{id} _T^\degree \ppp
\mathrm{id} _T^\degree$.
\end{assumption} 

So explicitly, we 
assume that for all $x,z \in \C/T_0$,
we have
\begin{equation}\label{fortythree}
    \forall \xi \colon x \To z 
    \,\exists \gamma \in G_{
    \mathrm{id} _T^\degree} : 
    \gamma x  = 
    \xi.
\end{equation}

This implies:

\begin{proposition}
If $ g \hsim \mathrm{id} _T$, then 
$g$ is invertible. 
\end{proposition}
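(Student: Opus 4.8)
The plan is to reduce the statement directly to the explicit form~\eqref{fortythree} of Assumption~\ref{fortytwo}, which is tailor-made for exactly this kind of conclusion. First I would unwind the hypothesis: by Definition~\ref{hcdef}, $g \hsim \mathrm{id}_T$ means there is a $2$-cell relating $g$ and $\mathrm{id}_T$, and since $\C$ is a $(2,1)$-category all $2$-cells are invertible, so I may take a $2$-cell $\xi \colon \mathrm{id}_T \To g$ (replacing the given one by its vertical inverse if it points the other way). The decisive observation is that $\mathrm{id}_T$ and $g$ are both $1$-cells with codomain $T$, hence objects of $\C/T$, and they share the source $T$; thus $\xi$ is precisely an admissible input for~\eqref{fortythree} with $x = \mathrm{id}_T$ and $z = g$.

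Applying~\eqref{fortythree} then furnishes some $\gamma \in G_{\mathrm{id}_T^\degree}$ with $\gamma \WhiskR \mathrm{id}_T = \xi$. Here I would invoke the horizontal unit law: right-whiskering by the identity $1$-cell $\mathrm{id}_T$ changes nothing, $\gamma \WhiskR \mathrm{id}_T = \gamma$, so in fact $\gamma = \xi$. The conclusion is now immediate from the definition of the source group: as $\gamma \in G_{\mathrm{id}_T^\degree} \subseteq G = \bigcup_{h \in \Aut}\C_2(\mathrm{id}_T, h)$, its target is an invertible $1$-cell, and $\target(\gamma) = \target(\xi) = g$, whence $g \in \Aut$ is invertible.

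There is no genuine computation involved; the entire content lies in recognising that a homotopy equivalence between $\mathrm{id}_T$ and $g$ is an admissible test $2$-cell for the interior condition~\eqref{fortythree}. Consequently, the only point I expect to require care is the bookkeeping of sources and targets --- arranging $\mathrm{id}_T$ as the object $x$ so that $\gamma \WhiskR \mathrm{id}_T$ lands on $g$, and using invertibility of $2$-cells to fix the direction of $\xi$. Once these are in place, invertibility of $g$ is simply read off from membership in $G$.
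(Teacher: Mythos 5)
Your argument is exactly the paper's: the paper's proof likewise notes that a $2$-cell $\xi \colon \mathrm{id}_T \To g$ lies in $G_{\mathrm{id}_T^\degree} \subseteq G$ by Assumption~\ref{fortytwo} (i.e.\ by~\eqref{fortythree} with $x = \mathrm{id}_T$, $z = g$, whiskering by the identity being trivial), so its codomain is invertible. Your version just spells out the direction-reversal of the $2$-cell and the unit law, both of which are fine.
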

\begin{proof}
If $ \xi \colon \mathrm{id} _T \To
g$ is a 2-cell, then by
Assumption~\ref{fortytwo}, 
$ \xi \in G_{\mathrm{id}
_T^\degree} \subseteq G$, so its
codomain is invertible. 
\end{proof}

More importantly, if
$ \gamma G_x \colon x \to y$ 
is a morphism in $\O_{s_{\C/T}}$, then by 
definition, we have 
$\gamma \adw x \preceq y$, 
so there exists 
a unique (all 1-cells are monic) 
$ f \colon x \to y$ 
with $ \gamma \adw x = yf$, and with 
$ \phi := \gamma \WhiskR x$ we
obtain a morphism 
$x \to y$ in $\C/T$, viewed as an
ordinary category. 
Thus we obtain a functor 
$$
    \F \colon 
    \O_{s_{\C/T}} 
    \rightarrow \C/T,\quad
    (\gamma G_x \colon 
    x \to y) \mapsto 
    ((f,\gamma \WhiskR x) 
    \colon x \to
    y)
$$
which is the identity map on
objects and is easily seen to be
compatible with the
$\aut(T)$-actions. 
It is by the definition of
$G_x$ faithful, and by
(\ref{fortythree}) and
Proposition~\ref{immerbesser}, we in fact
finally obtain:

\begin{proposition}\label{vladimir}
The functor $\F$ is an isomorphism 
and induces
an isomorphism 
of $\aut(T)$-categories 
$\ho(\C/T) \cong 
\ho(\O_{s_{\C/T}})$.  
\end{proposition}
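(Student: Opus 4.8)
\noindent The plan is to leverage the ingredients already assembled: the construction preceding the statement records that $\F$ is the identity on objects, faithful, and compatible with the $\aut(T)$-actions, so two things remain. First, to promote $\F$ to an isomorphism of the underlying ordinary categories, I would show it is full; since $\F$ is the identity on objects it suffices to prove that $\F$ restricts to a surjection from the $1$-cells $x \to y$ of $\O_{s_{\C/T}}$ onto $(\C/T)_1(x,y)$ for each fixed pair $x,y \in \C/T_0$. Second, I would check that the resulting isomorphism descends to the homotopy categories and is $\aut(T)$-equivariant.

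For fullness I would take an arbitrary $1$-cell $(f,\phi) \colon x \to y$ of $\C/T$, so that $\phi \colon x \To yf$ is a $2$-cell of $\C$, and regard $z := yf$ as an object of $\C/T_0$. Applying the strong homotopy extension property in the explicit form~(\ref{fortythree}) of Assumption~\ref{fortytwo} to the $2$-cell $\xi := \phi \colon x \To z$ produces $\gamma \in G_{\mathrm{id}_T^\degree} \subseteq G$ with $\gamma \WhiskR x = \phi$. Writing $\gamma \colon \mathrm{id}_T \To g$ with $g \in \Aut$, equality of the $2$-cells $\gamma \WhiskR x$ and $\phi$ forces $gx = yf$, hence $\gamma \adw x = gx = yf \preceq y$, so $\gamma G_x$ is a $1$-cell $x \to y$ of $\O_{s_{\C/T}}$ with $\F(\gamma G_x) = (f, \gamma \WhiskR x) = (f,\phi)$. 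This is the heart of the argument, and the single place where the cofibration hypothesis is indispensable: without it the locally defined deformation $\phi$ need not be realised by a globally defined $\gamma \in G$, and $\F$ would fail to be surjective on $1$-cells. Together with the faithfulness and bijectivity on objects already noted, this makes $\F$ an isomorphism of categories.

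For the homotopy categories I would observe that both $\ho(\C/T)$ and $\ho(\O_{s_{\C/T}})$ arise from the same object set by quotienting each $1$-cell hom-set by the relation ``there exists a $2$-cell'', which is $\hsim$ on the $\C/T$ side and $\equiv$ (Proposition~\ref{modest}) on the $\O_{s_{\C/T}}$ side. Since $\F$ is already a bijection on $1$-cells, it induces a bijection on these quotients exactly when it both preserves and reflects the relation, that is, when $\gamma G_x \equiv \varepsilon G_x$ holds if and only if there is a $2$-cell in $\C/T$ between $\F(\gamma G_x)$ and $\F(\varepsilon G_x)$; this equivalence is precisely Proposition~\ref{immerbesser}. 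Finally, because $\F$ intertwines the two $\aut(T)$-actions and each action descends to the corresponding homotopy category, the isomorphism $\ho(\C/T) \cong \ho(\O_{s_{\C/T}})$ is one of $\aut(T)$-categories. The only genuine obstacle is fullness; everything else is bookkeeping built on Propositions~\ref{modest} and~\ref{immerbesser} and Assumption~\ref{fortytwo}.
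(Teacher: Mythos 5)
Your proposal is correct and follows essentially the same route as the paper, which derives the proposition from the explicit form~(\ref{fortythree}) of Assumption~\ref{fortytwo} (giving fullness of the already faithful, bijective-on-objects, equivariant functor $\F$) together with Proposition~\ref{immerbesser} (matching the relations $\hsim$ and $\equiv$ on the now-identified hom-sets). You have merely written out the details that the paper leaves implicit, and your identification of fullness as the one place where the cofibration hypothesis is indispensable is accurate.
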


More precisely, ``$ \Rightarrow $''
in Assumption~\ref{seven} implies
that $\F^{-1}$ extends to a
2-functor $\C/T \to \O_{s_{\C/T}}$
and if in addition ``$
\Leftarrow$'' holds, this induces
an isomorphism
$
    \ho(\C/T) \cong 
    \ho(\O_{s_{\C/T}}). 
$

\begin{remark}
A 1-cell $x \colon X \to T$ in a
2-category is a
\emph{cofibration} (see
e.g.~\cite{MR4177953}) if for
all $\phi \colon s x \To v, 
s \colon T \to V,v \colon X \to V$
there exists $w \colon 
T \to V$ with $v=wx$ and 
$\rho \colon s \To w$ 
with $ \phi = \rho \WhiskR x$.
As we assume all 1-cells are monic, 
$\C/T$ remains unchanged if we
discard all
objects $V$ in $\C$ without a
1-cell $V \to T$, and 
then Assumption~\ref{fortytwo}
simply says that all 1-cells in 
$\C$ are cofibrations. 
In many examples, there is a 
\emph{path space object} $V^I$ in 
$\C$ that comes
equipped with source and target
1-cells 
$ \source,\target \colon
V^I \to V$ 
(think
of a space of paths $[0,1] \to V$ in a
space $V$), and 2-cells 
$ \phi \colon r \To v$ between
1-cells $r,v \colon X \to V$ 
correspond to 1-cells  
$ p \colon X \to V^I$ with  
$ r=\source p,v=\target p$.
Then the cofibration property 
becomes depicted by the 
following standard diagram 
\begin{equation*}
    \begin{tikzcd}[]
        X
            \ar[rr, "p"]
            \ar[dd, "x"{swap}]
            & &
        V^I
            \ar[dd, "\partial_0"]
            \\ \\
        T
            \ar[rr, "s"]
            \ar[uurr, dashed,
"\exists l"]
& &
		V.
    \end{tikzcd}
\end{equation*}
\end{remark}

\begin{example}\label{hepgood}
In $\C=\mfld^d$,
$ \mathrm{id} _T^\degree$ is the
empty embedding of the empty set,
and 
Assumptions~\ref{seven} and 
\ref{fortytwo} hold if and
only if $T$ has no boundary. 
We refer e.g.~\cite[Theorem~8.1.6]{MR1336822} 
for the
proof of the homotopy extension
property; see also
the recent post 
\cite{Goodwillie2018} by Goodwillie who
discusses the uniqueness of the
extensions. Reformulated in our
language, he therein points out
that $G_x$ and hence 
$ \gamma G_x$ is not just a
path-connected, but a contractible
topological space. In this sense,
the extension $ \gamma $ of a given
$ \phi $ to all of $ T$ is from a
homotopy-theoretic point of view
unique. The non-uniqueness of the
extension was the reason why we
introduced the auxiliary tool of
the orbit 2-category
$\O_{s_{\C/T}}$. Its
1-cells are the cosets 
$ \gamma G_x$ rather than the
representatives $ \gamma $
themselves, and that is why $\F$ is 
faithful by definition. 

Once it is established that 
2-cells between $x,z \colon X \to
T$ extend to all of $T$, it is 
easily seen that 
Assumption~\ref{seven} holds
and that $u \ppp v$ as defined in
Definition~\ref{defcos} 
means that $\mathrm{im}\, u$ is contained
in the interior of $ \mathrm{im}\,
v$ (Example~\ref{polityka2}):
indeed, if we have 
$$ 
\mathrm{im}\, x, \mathrm{im}\, z
\subseteq E \subsetneq T,\quad
    U = \mathrm{im}\, u 
\subsetneq V=T \setminus E,
$$
then there is a tubular
neighbourhood of $ \partial E$ that
is disjoint from $U$. Using a
partition of 1 we obtain a smooth
bump function $b \in C^\infty (T)$ 
with value $1$ on $E$ and value $0$
on $U$; now any extension $\eta \in
G$ of 
an isotopy $\phi \colon x \To z$
can be replaced by $ \gamma \in
G_u$ given by $$ 
    \gamma (t,p) := 
    \eta (t b(t), p),\quad
    t \in [0,1],p \in T
$$ 
with $ \gamma (t, p) =p$ for all 
$t$ and $p \in U$ (as in
Example~\ref{herreger}).
Conversely, if such an extension
of $ \phi $ exists, then 
$ \phi $ only acts inside 
$T \setminus U$. So if $U$ can be
chosen arbitrarily in $T\setminus
E$, $ \phi $ is of
the form $ e \xi$. 
\end{example}

\subsection{Tubular neighbourhoods} 
To complete the proof of
our theorem, we need to show
that $\degree$ extends to 
$\O_{s_{\C/T}}$, and here we simply
assume outright the required
condition (\ref{tubucondi}):
\begin{assumption}\label{last}
For all
$b,c,d \in \C/T_0$ with 
$c \ppp b^\degree$
and $d \ppp b^\degree$,
there are 
$ \rho \in G_c \cap G_d$ and 
$a \in \C/T_0$ with 
$\rho \adw a \sim b$
and $a \ppp b$. 
\end{assumption}

The picture we have in mind has
already been discussed in
Example~\ref{tubular}: we are given
submanifolds $B \subseteq T$
and $C,D \subset T \setminus B$ and
then use a tubular neighbourhood of 
$ \partial B$ to replace $B$ by a
slightly smaller but diffeomorphic
submanifold $A \subset B$. So
Assumption~\ref{last} is about an
abstract form of tubular
neighbourhoods and deformation
retracts.  

Nowhere in this paper
we have assumed that the preorders
studied are lattices,
i.e.~that one can take some form of 
unions or intersections of
subobjects, and in fact in the case
of $\C=\mfld^d$, one can not.
However, the union of submanifolds 
$C,D$ contained in the interior of
a submanifold is obviously
contained in a slightly larger
submanifold $E$, and if $\C/T$ has
this property, then we can use the
homotopy extension property to
formulate Assumption~\ref{last}
internally in $B$:

\begin{example}
Assume that in $\C/T$, there exists  
for all $c \ppp v,d \ppp v$ an $e
\ppp v$ with $c \preceq v, d
\preceq v$. Them
Assumption~\ref{last} can be
reduced to the assumption that for
all $b \colon B \to T$ there exists 
an invertible 1-cell $i \colon B
\to B$ with $a:=bi \ppp b$ and 
$ i \hsim \mathrm{id} _B$, as 
by Assumption~\ref{seven} a 2-cell
$ \iota \colon i \To \mathrm{id}
_B$ gives rise to 
$ b \iota \colon a \To b$ which
can be extended to 
a 2-cell $ \rho \in G_e \subseteq G_c \cap
G_d$ as in Assumption~\ref{last}. 
\end{example}
 
To sum up: the assumptions made in
the present section are those from
our main theorem, which follows from 
Proposition~\ref{vladimir}
(Assumptions~\ref{seven}
and~\ref{fortytwo} imply
$\ho(\C/T) \cong \ho(s_{\C/T})$) 
in combination with 
Corollary~\ref{modest2} 
(Assumption~\ref{last} implies that
$\ho(s_{\C/T})$ is self-dual).


\end{document}